\newtheorem{thm}{Theorem}[section]
\newtheorem{lem}[thm]{Lemma}
\newtheorem{cor}[thm]{Corollary}
\theoremstyle{definition}
\newtheorem{problem}[thm]{Problem}
\newcommand{\CPb}{\overline{\mathbb{CP}}{}^{2}}
\newcommand{\CP}{{\mathbb{CP}}{}^{2}}
\begin{document}

\title[Knot surgered elliptic surfaces for a $(2,2h+1)$-torus knot]
{Knot surgered elliptic surfaces without 1- and 3-handles for a $(2,2h+1)$-torus knot}

\author[N. Monden]{Naoyuki Monden}
\address{Department of Mathematics, Faculty of Science, \\Okayama University, 
Okayama 700-8530, Japan}
\email{n-monden@okayama-u.ac.jp}

\author{Reo Yabuguchi}
\address{Department of Mathematics, Faculty of Science, \\Okayama University, 
Okayama 700-8530, Japan}
\email{p21o36b2@s.okayama-u.ac.jp}



\begin{abstract}
We show that for any positive integer $h$, a knot surgered elliptic surface $E(n)_{T(2,2h+1)}$ for a $(2,2h+1)$-torus knot $T(2,2h+1)$ and the elliptic surface $E(1)_{2,2h+1}$ admit handle decompositions without 1- and 3-handles using the Kirby diagrams 
derived from Lefschetz fibrations on them. 
\end{abstract}

\maketitle

\section{Introduction}\label{Intro}
It is a central problem in $4$-dimensional differential topology to determine whether a given smooth $4$-manifold admits an exotic smooth structure or not. 
For example, between 2005 and 2010, various exotic smooth structures on $\CP \sharp n\CPb$ $(2\leq n\leq 8)$ had been intensively constructed by many authors (for example, \cite{Park2, SS, FS6, PSS, AP1, AP2}). 
However, it remains open whether $\mathbb{S}^4$ and $\CP$ admit an exotic smooth structure or not. 
If such a structure exists, then any handle decomposition of it must contain at least either a 1- or 3-handle (cf. \cite{Yasui1}). 
In contrast, there is the following, which is known as Problem 4.18 in Kirby's problem list \cite{Kirby}:
\begin{problem}\label{problem:1}
Does every simply connected, closed 4-manifold have a handlebody decomposition without 1-handles? Without 1- and 3-handles?
\end{problem} 
In fact, many classical simply connected 4-manifolds are known to admit neither 1- nor 3-handles in their handle decompositions (see, for example, \cite{GS, Yasui2, Akbulut2}). 
Note that if a 4-manifold admits a handle decomposition without 1-handles or 3-handles, then it is simply connected. 
In this paper, we focus on knot surgered elliptic surfaces for a $(2,2h+1)$-torus knot for Problem~\ref{problem:1}.

For a 4-manifold $X$ and a knot $K$ in $\mathbb{S}^3$, we denote by $X_K$ the \textit{knot surgered manifold} defined by Fintushel-Stern \cite{FS3}. 
Let $E(n)$ be the simply connected elliptic surface, and let $T_{p,q}$ be a $(p,q)$-torus knot for relatively prime integers $p$ and $q$. 
Our main result is the following. 
\begin{thm}\label{thm:1}
For any positive integer $h$ and $n\geq 1$, $E(n)_{T(2,2h+1)}$ admits a handle decomposition without 1- and 3-handles. 
\end{thm}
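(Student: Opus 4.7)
The plan is to exploit the fact that $T(2,2h+1)$ is a fibered knot of genus $h$. Combined with the standard genus-$1$ elliptic Lefschetz fibration on $E(n)$, Fintushel--Stern's knot surgery construction for fibered knots produces an explicit Lefschetz fibration $E(n)_{T(2,2h+1)} \to \mathbb{S}^2$ with regular fiber $\Sigma_{2h+1}$. Its monodromy can be written as a concrete product of Dehn twists on $\Sigma_{2h+1}$: $n$ copies of a word coming from the genus-$1$ fibration on $E(1)$, together with two copies of the monodromy Dehn twists associated to the $T(2,2h+1)$ fiber structure, placed symmetrically around the two surgery regions.

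With this Lefschetz fibration in hand, the next step is to draw the associated Kirby diagram by the standard Kas--Akbulut algorithm: the regular fiber $\Sigma_{2h+1}$ contributes $4h+2$ $1$-handles (dotted circles), $4h+2$ $3$-handles, one $0$-handle and one $4$-handle; each vanishing cycle contributes a $2$-handle framed $-1$ relative to the fiber; and a chosen section contributes one further $2$-handle. The central task is then to perform a sequence of handle slides (equivalently, Hurwitz moves on the monodromy factorization) that pair each $1$-handle with a $2$-handle into a cancelling pair. The geometry of $\Sigma_{2h+1}$ enters directly here: one selects $4h+2$ vanishing cycles whose images in the fiber each traverse the co-core of a distinct $1$-handle exactly once, so that after the handle slides the corresponding $2$-handles cancel the $1$-handles algebraically, and then geometrically.

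The hardest step will be eliminating the $4h+2$ $3$-handles. For this I would turn the resulting handle decomposition upside down, where the $3$-handles become $1$-handles of the dual decomposition, and carry out the same type of cancellation argument in the dual picture. Concretely, after the first round of cancellations one should exhibit, inside the remaining Kirby diagram, $4h+2$ embedded $2$-spheres bounding $3$-balls along which the $3$-handles are attached, so that they can be absorbed into the $4$-handle and only $0$-, $2$-, and $4$-handles remain. To organize the argument, I would reduce to the case $n=1$ using the fiber-sum description of $E(n)$ from $n$ copies of $E(1)$, and treat the parameter $h$ by induction: the base case $h=1$ corresponds to the trefoil $T(2,3)$, where the vanishing-cycle picture is small enough to handle directly; the inductive step increases $h$ by one at the cost of adding two Dehn twists corresponding to the extra crossings of $T(2,2h+1)$, with the cancellation pattern extending by construction.
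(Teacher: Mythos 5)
Your overall strategy---use the Fintushel--Stern/Yun Lefschetz fibration on $E(n)_{T(2,2h+1)}$, draw the associated Kirby diagram, and cancel the $1$-handles coming from the fiber against $2$-handles coming from vanishing cycles---is the same starting point as the paper, but there are two genuine problems with the way you propose to carry it out.

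First, the fiber genus is wrong: for a genus-$h$ fibered knot the fibration on $E(n)_K$ has fiber $\Sigma_{2h+n-1}$, not $\Sigma_{2h+1}$ (these agree only when $n=2$), so the count of $1$- and $3$-handles should be $2(2h+n-1)=4h+2n-2$. More seriously, your plan for eliminating the $3$-handles does not work as stated. After cancelling the $1$-handles in the Kirby diagram of the closed fibration over $\mathbb{S}^2$, the attaching data of the $3$-handles and of the dual ($$upside-down$$) decomposition are not accessible from the diagram, so ``carrying out the same type of cancellation argument in the dual picture'' is precisely the step you cannot perform; exhibiting the $4h+2n-2$ dual $2$-spheres is the whole difficulty, and you give no mechanism for it. The paper avoids this entirely by a symmetric splitting: the global monodromy ${}_{\Phi_K}(W)^2\cdot W^2$ is cyclically permuted to ${}_{\Phi_K}(W)\cdot W\cdot W\cdot{}_{\Phi_K}(W)$, the base $\mathbb{S}^2$ is cut into two disks so that the two resulting Lefschetz fibrations over $\mathbb{D}^2$ are isomorphic (each conjugate to one with monodromy $W\cdot{}_{\Phi_K^{-1}}(W)$), each half automatically has no $3$- or $4$-handles, the $1$-handles of one half are cancelled explicitly by handle slides, and then the second (diffeomorphic) half is glued on upside down, contributing only $2$- and $4$-handles. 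Without some such decomposition of the closed manifold into two pieces you can control, the $3$-handle elimination remains an unproved assertion. Your proposed reduction to $n=1$ via fiber sums and the induction on $h$ have the same defect: a fiber sum of manifolds admitting decompositions without $1$- and $3$-handles does not obviously inherit such a decomposition, so neither reduction is justified. Finally, note that even the $1$-handle cancellation is not immediate from the raw vanishing cycles: in the paper one must first perform a specific sequence of handle slides (sliding the twisted cycles $t_{a_{i+1}}(D_i)$ over the $D_i$, and the $D_j$ over each other) before each remaining $2$-handle runs over a distinct $1$-handle geometrically once; your assertion that one can simply ``select'' such vanishing cycles glosses over this.
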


In this paper, we show Theorem~\ref{thm:1} by using a Kirby diagram derived from a Lefschetz fibration on $E(n)_{T(2,2h+1)}$. 
We give some related results. 
Okamori \cite{Okamori} constructed an exotic $\CP\sharp 9\CPb$ without $1$-handles in a handle decomposition using a standard Kirby diagram derived from its genus-$2$ Lefschetz fibration structure. 
Based on the idea, the second author \cite{Yabuguchi1, Yabuguchi2} gave a handle decomposition of $E(1)_K$ without $1$-handles for any genus-$1$ fibered knot $K$. 
Baykur \cite{Baykur} showed that there exist infinitely many irreducible 4-manifolds, which are obtained by fiber summing of Lefschetz fibrations given in \cite{BH}, with prescribed signature and spin type admitting handle decompositions without $1$- and $3$-handles using their Lefschetz fibration structures (and without using explicit Kirby diagrams). 
As a corollary, he gave a negative answer to Problem 4.91 in Kirby's problem list \cite{Kirby}.

Here, we present some topics on Problem~\ref{problem:1} for $E(n)_{p,q}$ ($n\geq 1$, $p,q\geq 2$ and $\gcd(p,q)=1$), which is defined a complex surface obtained by a pair of $p$- and $q$-log transforms on $E(n)$. 
Harer, Kas and Kirby \cite{HKK} conjectured that every handle decomposition of $E(1)_{2,3}$, which is an exotic $\CP \sharp 9\CPb$, must have both 1- and 3-handles. 
Moreover, the following is noted by Gompf in \cite{Gompf}: \textit{it is a good conjecture that $E(n)_{p,q}$ has no handle decomposition without 1- and 3-handles.} 
Around 2010, these conjectures were disproved in \cite{Yasui2, Akbulut2}. 
Akbulut \cite{Akbulut2} proved that $E(1)_{2,3}$ admits a handle decomposition without $1$- and $3$-handles, and Yasui \cite{Yasui2} showed that $E(n)_{p,q}$ has a handle decomposition without $1$-handles for $n\geq 1$ and $(p, q) = (2, 3), (2, 5), (3, 4), (4, 5)$. 
In \cite{Akbulut1}, Akbulut showed that for $n=1,2,\ldots$, there is an infinite family $X_n$ of mutually non-diffeomorphic exotic copies of $E(1)$ without $1$- and $3$-handles such that $X_1 = E(1)_{2,3}$ and $X_n = E(1)_{K_n}$, where $K_n$ is are knots with distinct Alexander polynomials. 
See also \cite{Yasui1,Yasui3,Okamori} for related results.

Recently, their alternative proofs or generalizations on \cite{Yasui2, Akbulut2} were given in \cite{Sakamoto, Yabuguchi1, Yabuguchi2, Kusuda, Tange}. 
Sakamoto in \cite{Sakamoto} (resp. the second author \cite{Yabuguchi1,Yabuguchi2}) constructed a handle decomposition of $E(1)_{2,7}$ (resp. $E(1)_{2,3}$) without 1-handles. 
Kusuda \cite{Kusuda} showed that $E(n)_{5,6}$, $E(n)_{6,7}$, $E(n)_{7,8}$ and $E(n)_{8,9}$ have handle decompositions without $1$-handles for $n\geq 4$, $n\geq 5$, $n\geq 9$ and $n\geq 24$, respectively. 
Taki \cite{Taki} studied an upper bound of the minimal number $l$ that $E(n)_{p,q} \sharp l\CPb$ has a handle decomposition without 1-handles.
Tange \cite{Tange} showed that $E(n)_K$ admits a handle decomposition with no $1$-handles for a knot $K$ with bridge number at most $9n$, and hence, $E(1)_{p,q}$ also admits a handle decomposition with no $1$-handles for $\min\{p,q\}\leq 9$ since $E(1)_{T_{p,q}}$ is diffeomorphic to $E(1)_{p,q}$ (see \cite{FS5, Park}). 
He also constructed a handle decomposition of $E(n)_{p,q}$ without $1$-handles for $\min\{p,q\} \leq 4$. 
Note that $E(n)_{p,q}$ is not diffeomorphic to $E(n)_{T(p,q)}$ for $n > 1$ since their Seiberg-Witten invariants are different. 
These results are about only 1-handles, whereas 
we obtain the following consequence from Theorem~\ref{thm:1}. 
\begin{cor}\label{cor:1}
For any positive integer $h$, $E(1)_{2,2h+1}$ admits a handle decomposition without 1- and 3-handles. 
\end{cor}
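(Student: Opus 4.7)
The plan is to obtain the corollary as an immediate consequence of Theorem~\ref{thm:1} combined with the classical identification between knot surgered elliptic surfaces for torus knots and log-transformed elliptic surfaces. Specifically, I would first specialize Theorem~\ref{thm:1} to the case $n=1$, which yields a handle decomposition of $E(1)_{T(2,2h+1)}$ without 1- and 3-handles for every positive integer $h$.

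Next, I would invoke the diffeomorphism $E(1)_{T(p,q)} \cong E(1)_{p,q}$ for any coprime integers $p,q \geq 2$, a fact already cited in the introduction through the references \cite{FS5, Park}. Applying this to the pair $(p,q)=(2,2h+1)$ gives a diffeomorphism $E(1)_{T(2,2h+1)} \cong E(1)_{2,2h+1}$. Since admitting a handle decomposition without 1- and 3-handles is a diffeomorphism invariant, transporting the decomposition supplied by Theorem~\ref{thm:1} across this diffeomorphism yields the desired handle decomposition of $E(1)_{2,2h+1}$.

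There is essentially no obstacle here; the entire content of the corollary is packaged inside Theorem~\ref{thm:1} and the Fintushel--Stern--Park identification, so the proof is a one-line combination of the two. The only point worth flagging is that the identification $E(1)_{T(p,q)} \cong E(1)_{p,q}$ is specific to $n=1$ and genuinely fails for $n>1$ (as noted in the introduction, the Seiberg--Witten invariants already distinguish $E(n)_{p,q}$ from $E(n)_{T(p,q)}$ when $n>1$), so one cannot formally strengthen the corollary to $E(n)_{2,2h+1}$ for general $n$ by this route.
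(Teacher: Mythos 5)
Your proposal is correct and matches the paper's intended argument: the corollary is deduced from Theorem~\ref{thm:1} with $n=1$ together with the diffeomorphism $E(1)_{T(p,q)} \cong E(1)_{p,q}$ cited in the introduction via \cite{FS5, Park}. Your additional remark about why this does not extend to $n>1$ is also consistent with the paper's discussion of Seiberg--Witten invariants.
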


The outline of the paper is as follows.
In Section~\ref{LF}, we present the basics of Lefschetz fibrations. 
In Section~\ref{Kirbydiagram}, we briefly review Kirby diagrams derived from Lefschetz fibrations and introduce Kirby diagrams ``on surfaces". 
Section~\ref{LFonE(n)_K} gives facts on Lefschetz fibrations on $E(n)_K$ for a fibered knot $K$. 
In the last section, we prove Theorem~\ref{thm:1}.

\section{Basics of Lefschetz fibrations}\label{LF}
In this section, we present some definitions and facts concerning Lefschetz fibrations. 
More details can be found in \cite{Matsumoto, GS}.

Let $X$ be a closed, connected, oriented, smooth $4$-manifold, and let $B$ be the $2$-dimensional disk $\mathbb{D}^2$ or the $2$-dimensional sphere $\mathbb{S}^2$. 
We say that a proper smooth map $f: X \to B$ is a genus-$g$ \textit{Lefschetz fibration} if a regular fiber of $f$ is diffeomorphic to a Riemann surface $\Sigma_g$ of genus $g$, all critical values of $f$ lie in $\mathrm{Int} B$ and for each critical point $p$ and its image $f(p)$, there are complex local coordinate charts agreeing with the orientations of $X$ and $B$ for which $f$ is of the form $f(z_1,z_2) = z_1z_2$. 
Throughout this paper, we require that the restriction of $f$ to the set $C$ of critical points is injective and that no fiber contains a sphere of self-intersection $-1$. 
For a genus-$g$ Lefschetz fibration, any fiber containing a critical point is called a \textit{singular fiber}, which is obtained by collapsing a simple closed curve, called the \textit{vanishing cycle}, in a nearby regular fiber to the critical point.

Let $\Gamma_g^b$ be \textit{mapping class group} of the compact oriented surface $\Sigma_g^b$ obtained by removing $b$ disjoint open disks from $\Sigma_g$, that is the group of isotopy classes of orientation preserving self-diffeomorphisms of $\Sigma_g^b$. 
We assume that diffeomorphisms and isotopies fix the points of the boundary. 
To simplify notation, we write $\Sigma_g = \Sigma_g^0$ and $\Gamma_g = \Gamma_g^0$. 
In this paper, we use the same symbol for a diffeomorphism and its isotopy class, or a simple closed curve and its isotopy class. 
For $\phi_1$ and $\phi_2$ in $\Gamma_g^b$, the notation $\phi_2\phi_1$ means that we first apply $\phi_1$ and then $\phi_2$. 
Let $t_c$ be the Dehn twist about a simple closed curve $c$ on $\Sigma_g^b$. 
It is well-known that the relation $t_{\phi(c)}=\phi t_c\phi^{-1}$ holds for an element $\phi$ in $\Gamma_g^b$. 
For a product $V=t_{v_m} \cdots t_{v_2} t_{v_1}$ of Dehn twist, we set ${}_{\phi}(V) = t_{\phi(v_m)} \cdots t_{\phi(v_2)} t_{\phi(v_1)}$ for an element $\phi$ in $\Gamma_g^b$.

The \textit{global monodromy} of a genus-$g$ Lefschetz fibration $f:X \to \mathbb{D}^2$ with the vanishing cycles $v_1,\ldots,v_n$ of the singular fibers comprises a product of right-handed Dehn twist in $\Gamma_g$ as $V=t_{v_m} \cdots t_{v_2} t_{v_1} \in \Gamma_g$. 
Conversely, we obtain a genus-$g$ Lefschetz fibration over $\mathbb{D}^2$ with the vanishing cycles $v_1,\ldots,v_n$ from $V=t_{v_m} \cdots t_{v_2} t_{v_1}$ in $\Gamma_g$. 
These hold for a genus-$g$ Lefschetz fibration over $\mathbb{S}^2$, but we require that $V = \mathrm{id}$.

Two Lefschetz fibrations $f_1:X_1\to B$ and $f_2:X_2\to B$ are said to be \textit{isomorphic} if there exist orientation preserving diffeomorphisms $H: X_1\to X_2$ and $h: B \to B$ such that $f_2\circ H = h\circ f_1$.  
It is well-known that when we apply a cyclic permutation to the global monodromy $V=t_{v_m} \cdots t_{v_2} t_{v_1}$ of a genus-$g$ Lefschetz fibration $f: X \to B$, the genus-$g$ Lefschetz fibration $f'$ with the resulting global monodromy is the same as the original one, and therefore $f'$ is isomorphic to $f$. 
Moreover, by theorems of Kas \cite{Kas} and Matsumoto \cite{Matsumoto}, a Lefschetz fibration $f'$ with a global monodromy ${}_{\phi}(V)$ for any element $\phi$ in $\Gamma_g$ is isomorphic to $f$.

\section{Kirby diagrams}\label{Kirbydiagram}
In this section, we introduce a Kirby diagram on a surface with a boundary component derived from a Lefschetz fibration.  

\subsection{A Kirby diagram derived from a Lefschetz fibration}\label{KirbydiagramLF}
We recall how to draw a Kirby diagram of $X$ admitting a genus-$g$ Lefschetz fibration $f: X \to \mathbb{D}^2$ with a global monodromy $t_{v_m} \cdots t_{v_2} t_{v_1}$ in $\Gamma_g$. 
For the terminology, we refer to the reader to \cite{GS}.

We start with a handle decomposition of $\Sigma_g \times \mathbb{D}^2$ with one 0-handle, $2g$ 1-handles, and one $0$ framed 2-handle, which runs over all 1-handles, from a fixed handle decomposition of $\Sigma_g$. 
For example, when we consider a handle decomposition of $\Sigma_g$ as in the upper part of Figure~\ref{HDandKD1}, the Kirby diagram of $\Sigma_g \times \mathbb{D}^2$ obtained from the handle decomposition of $\Sigma_g$ is depicted in the lower part of Figure~\ref{HDandKD1}. 
Thus, we get a handle decomposition of $X$ by attaching $m$ $2$-handles $h_1,h_2,\ldots,h_m$ to $\Sigma_g \times \mathbb{D}^2$ along the simple closed curves $v_1,v_2,\ldots,v_m$ on different fibers of $\Sigma_g \times \mathbb{S}^1 \to \mathbb{S}^1$ with framing on less than the product framing of $\Sigma_g \times \mathbb{S}^1$. 
From the observation above, a Kirby diagram of $X$ is obtained from the Kirby diagram of $\Sigma_g \times \mathbb{D}^2$ by adding $m$ $2$-handles $h_1,h_2,\ldots,h_m$ along vanishing cycles $v_1,v_2,\ldots,v_m$ with framing $-1$ in parallel level with index increasing toward the reader. 
The framing of the 2-handle derived from $\Sigma_g \times \mathbb{D}^2$ is $0$. 
(For example, the right part of Figure~\ref{HDandKD2} depicts a Kirby diagram of $X_0$ admitting the genus-$2$ Lefschetz fibration $f_0: X_0 \to \mathbb{D}^2$ with the global monodromy $t_{x_3} t_{x_2} t_{x_1}$, where $x_1,x_2,x_3$ are the simple closed curves on $\Sigma_2$ as in the left part of Figure~\ref{HDandKD2}.)

In this paper, we employ the dotted circle notation for $1$-handles. 
We fix the subsurface $\Sigma_g^1$ $(\subset \mathbb{S}^3)$ obtained by removing the $2$-dimensional 2-handle from the fixed handle decomposition of $\Sigma_g$. 
Then, we redraw this Kirby diagram of $X$ ``on $\Sigma_g^1$" so that $(-1)$-framed 2-handles are drawn on $\mathrm{Int} \Sigma_g^1(\subset \mathbb{S}^3)$ preserving over- and under-crossing information, the $0$-framed attaching circle is the boundary curve of $\Sigma_g^1$, and each ($4$-dimensional) 1-handle encircles the belt sphere of each $2$-dimensional 1-handle of $\Sigma_g^1$ (see, for example, the left part of Figure~\ref{HDandKD3}). 
Strictly speaking, the Kirby diagram in $\mathbb{S}^3$ is regarded with that in $\Sigma_{g}^1 \times [0,1] (\subset \mathbb{S}^3)$, but for convenience, we consider all 2-handles in the Kirby diagram as those on $\Sigma_g^1$. 
Note that by forgetting the surface $\Sigma_g^1$, we obtain a standard Kirby diagram of $X$ (see, for example, the right part of Figure~\ref{HDandKD3}). 
Let $\alpha_1,\beta_1,\alpha_2,\beta_2,\ldots,\alpha_g,\beta_g$ be the standard generators of $\pi_1(\Sigma_g^1,p)$ at the base point $p$ so that $\alpha_i$ (resp. $\beta_i$) passes through $(2i-1)$-th (resp. $2i$-th) $2$-dimensional $2$-handle of $\Sigma_g^1$ only once and does not pass through the other handles (see for example, Figure~\ref{1handles}). 
We denote by $\alpha_i^\ast$ (resp. $\beta_i^\ast$) the $4$-dimensional 1-handle (or the dotted circle) encircling $\alpha_i$ (resp. $\beta_i$).

\subsection{A Kirby diagram on a surface with a boundary component}\label{KirbydiagramS}
It is troublesome to draw simple closed curves or 2-handles on the fixed subsurface $\Sigma_g^1$ arising from the fixed handle decomposition of $\Sigma_g$. 
For this reason, we first draw simple closed curves or 2-handles on $\Sigma_g$ (preserving over- and under-crossing information), and take a small open disk $D$ on $\Sigma_g$. 
Moreover, we fix an ambient isotopy which deforms $\Sigma_g - D$ to $\Sigma_g^1$. 
Next, we consider handle silding operations ``on $\Sigma_g - D$". 
Finally, by deforming $\Sigma_g - D$ to $\Sigma_g^1$ by the isotopy, we obtain a Kirby diagram ``on $\Sigma_g^1$". 
Here, we omit the framings of 2-handles since we don't need it to prove Theorem~\ref{thm:1}. 
Each small empty box in Figures means some full twist, but we also omit the number since we don't need it to prove Theorem~\ref{thm:1}. 
For example, let $h_1,h_2$ be the $2$-handles on $\Sigma_2-D$ as in the left upper part of Figure~\ref{diagram3}, where $h_1'$ is a framing of $h_1$. 
We slide $h_2$ over $h_1$ as in the left lower part of this figure. 
The right part of this Figure expresses the corresponding operation for the 2-handles $h_1,h_2$ on the $\Sigma_2^1$ arising from the given handle decomposition of $\Sigma_2$. 
Then, by isotopy, we obtain $h_1$ and a new $2$-handle as in Figure~\ref{diagram4}.

The dotted circles (i.e. 4-dimensional 1-handles) in a Kirby diagram on $\Sigma_g-D$ can be drawn using the fixed ambient isotopy from the dotted circles (i.e. 4-dimensional 1-handles) in a given Kirby diagram on $\Sigma_g^1$. 
However, it is also troublesome to draw them explicitly (cf. the left and right parts of Figure~\ref{deform1handle}. The thin dashed lines go through the inside of $\Sigma_g-D$). 
For this reason, we consider dotted circles as dotted arcs as follows: 
We project dotted circles (i.e., 1-handles) $\alpha_i^\ast, \beta_i^\ast$ to belt spheres of $2$-dimensional 1-handles of $\Sigma_g^1$, and the images are drawn as ``dotted segments'' so that all 2-handles in $\int \Sigma_g^1$ either are disjoint from them or undercross them. 
By deforming $\Sigma_g^1$ to $\Sigma_g-D$ by the fixed ambient isotopy, we regard $2g$ $1$-handles to ``$2g$ disjoint arcs with dots connecting two disjoint points in $\partial \Sigma_g - D$'' (see, for example, Figure~\ref{cancelingoperation0}). 
The dotted arcs are ``dual arcs" of the standard generators of $\pi_1(\Sigma_g-D,p)$, and the disk $D$ is a ``dual disk" of $p$.

Figure~\ref{cancelingoperation1} (with the framing indicated by double-strand notation) shows the effect of a handle canceling operation of a canceling 1-handle/2-handle pair in a Kirby diagram on $\Sigma_g^1$. 
This is the same as that in the standard Kirby diagram corresponding to the diagram on $\Sigma_g^1$ since this is obtained by forgetting $\Sigma_g^1$. 
In this paper, we only apply a handle canceling operation on $\Sigma_g - D$ (see, for example, Figure~\ref{cancelingoperation0}, with the framing indicated by double-strand notation), which corresponds to it inside of the $0$-framed 2-handle derived from $\Sigma_g \times \mathbb{D}^2$ as in the lower part of Figure~\ref{HDandKD1}, 
although it is no problem to apply it on $D$, which corresponds to it outside of the $0$-framed 2-handle.

A Kirby diagram on a surface (i.e. on $\Sigma_g - D$) has the advantage that we can directly draw 2-handles from vanishing cycles. 
It can be found in the proof of Theorem~\ref{thm:1} in Section~\ref{proof:thm1}.

\section{Knot surgered elliptic surfaces}\label{LFonE(n)_K}
Fintsuhel-Stern \cite{FS4} showed that for a genus-$h$ fibered knot $K$, $E(n)_K$ admits a genus-$(2h+n-1)$ Lefschetz fibration over $\mathbb{S}^2$. 
In particular, the fibration is a fiber sum of two copies of a genus-$(2h+n-1)$ Lefschetz fibration $f_{h,n}$ on $(\Sigma_h \times \mathbb{S}^2) \sharp 4\CPb$ by gluing a diffeomorphism $\Phi_K$ on $\Sigma_{2h+n-1}$. 
We omit the definition of a fiber sum operation, but we explain the precise definition of $\Phi_K$. 
Let us decompose $\Sigma_{2h+n-1}$ into three surfaces $\Sigma_h^1$, $\Sigma_{n-1}^2$ and $\Sigma_{h}^1$, which are the subsurfaces of the right, middle and left of $\Sigma_{2h+n-1}$ as in Figure~\ref{gurtas}, respectively. 
The diffeomorphism $\Phi_K$ is the map so that the restriction of $\Phi_K$ to the first $\Sigma_h^1$ is $\phi_K$, and the restrictions to $\Sigma_{n-1}^2$ and the second $\Sigma_h^1$ is the identity maps, where $\phi_K$ is the monodromy of the genus-$h$ fibered knot $K$. 
Yun \cite{Yun2} gave an explicit global monodromy of the Lefschetz fibration on $E(n)_K$. 
To state this, let us introduce some notations and the background on the global monodromy. 
Let $c_1,c_2,\ldots,c_{2n-1},D_0,D_1,\ldots,D_{2h}$ be the simple closed curves on $\Sigma_{2h+n-1}$ as in Figure~\ref{gurtas}, we set 
\begin{align*}
W = t_{c_{2n-2}} \cdots t_{c_2} t_{c_1} t_{c_1} t_{c_2} \cdots t_{c_{2n-2}} t_{D_0} t_{D_1} \cdots t_{D_{2h}} t_{c_{2n-1}}. 
\end{align*}
Gurtas \cite{Gurtas} showed that the isotopy class of a certain involution $\iota$ on $\Sigma_{2h+n-1}$ is expressed as $W$, and hence $W^2=\mathrm{id}$ in $\Gamma_{2h+n-1}$ (see also \cite{Matsumoto, C, Korkmaz}). 
Yun \cite{Yun1} also verified this fact, up to Hurwitz equivalence, and showed that the global monodromy of the Lefschetz fibration $f_{h,n}$ is $W$. 
We now can state the global monodromy of the Lefschetz fibration on $E(n)_K$. 
\begin{thm}[\cite{FS4},\cite{Yun2}]\label{monodromyEnK}
Let $K$ be a genus-$h$ fibered knot in $\mathbb{S}^3$. 
Then the knot surgered 4-manifold $E(n)_K$ admits a genus-$(2h+n-1)$ Lefschetz fibration $f: E(n)_K \to \mathbb{S}^2$ with the global monodromy ${}_{\Phi_K}(W)^2 \cdot W^2$. 
\end{thm}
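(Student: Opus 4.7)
My plan is to combine Fintushel--Stern's decomposition of $E(n)_K$ as a fiber sum with Yun's explicit monodromy factorization for $f_{h,n}$, and then apply the standard rule for how global monodromies compose under fiber summing over the sphere.

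First, I would recall from \cite{FS4} that $E(n)_K$ is diffeomorphic to the fiber sum of two copies of the Lefschetz fibration $f_{h,n}$, glued along regular fibers by the diffeomorphism $\Phi_K$ of $\Sigma_{2h+n-1}$ introduced in the excerpt --- namely $\phi_K$ on the first $\Sigma_h^1$-summand and the identity on $\Sigma_{n-1}^2$ and on the second $\Sigma_h^1$-summand. All of the knot-dependence of $E(n)_K$ is packaged into this single diffeomorphism.

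Second, I would invoke Yun's theorem (\cite{Yun1}, \cite{Yun2}) that the closed Lefschetz fibration $f_{h,n}$ has monodromy factorization $W \cdot W = W^2$ in $\Gamma_{2h+n-1}$. That $W^2 = \mathrm{id}$, which is exactly what is needed for compatibility with base $\mathbb{S}^2$, follows from Gurtas' identification of $W$ with the order-two involution $\iota$.

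Third, I would apply the general monodromy rule for fiber sums: if $X_i \to \mathbb{S}^2$ are Lefschetz fibrations with monodromy factorizations $V_i$, then the fiber sum along regular fibers glued by a diffeomorphism $\Phi$ admits a Lefschetz fibration over $\mathbb{S}^2$ whose monodromy factorization is obtained by concatenating $V_1$ with the transport ${}_{\Phi}(V_2)$ of $V_2$ through $\Phi$. Setting $V_1 = V_2 = W^2$ and $\Phi = \Phi_K$, and using that Dehn-twist conjugation distributes over products so that ${}_{\Phi_K}(W^2) = {}_{\Phi_K}(W)^2$, I obtain precisely the factorization ${}_{\Phi_K}(W)^2 \cdot W^2$, up to a cyclic permutation that preserves the isomorphism class of the Lefschetz fibration by the discussion in Section~\ref{LF}.

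The main obstacle I anticipate is bookkeeping: matching conventions for concatenation order and for which copy of the fibration is conjugated by $\Phi_K$. A secondary subtlety is the identification of the gluing diffeomorphism in Fintushel--Stern's construction with the $\Phi_K$ of the excerpt: this amounts to checking that the knot-surgery operation replaces the trivial monodromy of $E(n)$'s elliptic fibration along one factor direction by the monodromy $\phi_K$ of the fibered knot, exactly on the first $\Sigma_h^1$-summand. Since Yun's factorization is only determined up to Hurwitz equivalence and global conjugation, the invariance results of Kas and Matsumoto recalled in Section~\ref{LF} are essential for concluding that the resulting factorization produces a well-defined Lefschetz fibration on $E(n)_K$.
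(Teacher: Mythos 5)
The paper does not actually prove Theorem~\ref{monodromyEnK} --- it is imported from Fintushel--Stern and Yun --- and your sketch reproduces exactly the derivation outlined in Section~\ref{LFonE(n)_K}: the twisted fiber sum decomposition of $E(n)_K$ into two copies of $f_{h,n}$ glued by $\Phi_K$, Yun's monodromy factorization $W^2$ for $f_{h,n}$ (with $W^2=\mathrm{id}$ guaranteed by Gurtas' involution), and the standard concatenation rule for twisted fiber sums giving ${}_{\Phi_K}(W^2)\cdot W^2 = {}_{\Phi_K}(W)^2\cdot W^2$ up to cyclic permutation and global conjugation. This is correct and is essentially the same route as the cited sources, so no further comparison is needed.
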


Note that the monodromy $\phi_{T_{2,2h+1}}$ in $\Gamma_{h}^1$ of a $(2,2h+1)$-torus knot $T_{2,2h+1}$ is 
\begin{align*}
\Phi_{T_{2,2h+1}} = t_{a_{2h}}^{-1} \cdots t_{a_{2}}^{-1} t_{a_{1}}^{-1},  
\end{align*}
where $a_1,a_2,\ldots,a_{2h}$ are the simple closed curves on $\Sigma_h^1$ as a subsurface of $\Sigma_{2h+n-1}$ as in Figure~\ref{gurtas} (see, for example \cite{Yun2}).

\section{Proof of Theorem~\ref{thm:1}}\label{proof:thm1}
In this section, we prove Theorem~\ref{thm:1}. 
The following two lemmas are used to show Theorem~\ref{thm:1}. 
\begin{lem}\label{lem:1000}
Let $f:X \to \mathbb{S}^2$ be a genus-$g$ Lefschetz fibration, and let $f_1: X_1\to \mathbb{D}_1$ and $f_2: X_2 \to \mathbb{D}_2$ be genus-$g$ Lefschetz fibrations by cutting the base $\mathbb{S}^2$ of $f$ into two disks $\mathbb{D}_1$ and $\mathbb{D}_2$, respectively. 
If $X_1$ and $X_2$ admit handle decompositions without 1- and 3-handles, respectively, then $X$ admits a handle decomposition without 1- and 3-handles.   
\end{lem}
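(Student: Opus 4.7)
The plan is to assemble $X = X_1 \cup X_2$ by putting the given decomposition of $X_1$ first and then stacking the \emph{dual} of $X_2$'s decomposition on top, glued along the common boundary $f^{-1}(\partial\mathbb{D}_1) = f^{-1}(\partial\mathbb{D}_2)$, which is the total space of a $\Sigma_g$-bundle over $\mathbb{S}^1$ sitting inside $X$.

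The key point is the behaviour of a handle decomposition under duality: for a compact $4$-manifold $Y$ with boundary, reversing a handle decomposition of $Y$ transforms each $k$-handle into a $(4-k)$-handle, now attached on a collar of $\partial Y$. In particular, since $4-1=3$ and $4-3=1$, the property of having neither $1$- nor $3$-handles is preserved under duality. Thus, if $X_2$ admits a decomposition built from $0$-, $2$-, and $4$-handles, then its dual decomposition is likewise built only from $0$-, $2$-, and $4$-handles (with the roles of $0$- and $4$-handles swapped).

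Concatenating: first build $X_1$ from its given $0$-, $2$-, and (possibly) $4$-handles; then identify $\partial X_1$ with $\partial X_2$ via the natural diffeomorphism coming from $f$; and finally attach the dual handles of $X_2$, again of indices $0$, $2$, and $4$. The result is a handle decomposition of the closed manifold $X$ whose handle indices are entirely contained in $\{0,2,4\}$, as required. (Nothing prevents $X$ from carrying several $0$- or $4$-handles in this construction, but the statement to be proved concerns only the absence of $1$- and $3$-handles.)

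I do not anticipate a genuine obstacle. The duality of handle decompositions for a manifold with boundary, and the compatibility of the boundary gluing with the ambient smooth structure, are both standard facts of $4$-dimensional handle theory, and the conclusion reduces to the elementary arithmetic observation that the set $\{0,2,4\}$ is preserved by the involution $k\mapsto 4-k$. If any care is needed, it is only to verify that the identification $\partial X_1 \cong \partial X_2$ induced by $f$ agrees (up to isotopy) with the one implicit in the duality construction, but this is automatic once one chooses the collar of $\partial X_2$ inside $X$ on the $X_2$ side.
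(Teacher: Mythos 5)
Your proposal is correct and follows essentially the same route as the paper: the paper also turns the handle decomposition of $X_2$ ``upside down'' (i.e., passes to the dual decomposition, so that $k$-handles become $(4-k)$-handles) and glues it to the decomposition of $X_1$ along the common fibered boundary. Your remark that the index set $\{0,2,4\}$ is preserved under $k\mapsto 4-k$ is exactly the point the paper uses implicitly.
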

\begin{proof}
From the assumption, the handle decompositions of $X_1$ and $X_2$ only have one 0-handle and some 2-handles. 
By turning the handlebody of $X_2$ ``upside down'', we obtain a handle decomposition of $X_2$ which only has some 2-handles and one 4-handle. 
Since $X$ is obtained by gluing $X_1$ and $X_2$ along their boundaries, we get a required handle decomposition of $X$. 
\end{proof}

\begin{lem}\label{lem:1001}
Let $a_1,a_2,\ldots,a_{2h},c_1,c_2,\ldots,c_{2n-1},D_0,D_1,\ldots,D_{2h}$ be simple closed curves on $\Sigma_{2h+n-1}$ as in Figure~\ref{gurtas}, and set $D_{2h}' = t_{a_1}t_{a_2} \cdots t_{a_{2h}}(D_{2h})$. 
The handle decomposition of $X$ arising from the Lefschetz fibration $f:X \to \mathbb{D}^2$ with the global monodromy $W \cdot W'$ has no 1-handles, where 
\begin{align*}
W &= t_{c_{2n-2}} \cdots t_{c_2} t_{c_1} t_{c_1} t_{c_2} \cdots t_{c_{2n-2}} t_{D_0} t_{D_1} \cdots t_{D_{2h}} t_{c_{2n-1}}, \\
W' &= t_{c_{2n-2}} \cdots t_{c_2} t_{c_1} t_{c_1} t_{c_2} \cdots t_{c_{2n-2}} t_{t_{a_1}(D_0)} t_{t_{a_2}(D_1)} \cdots t_{t_{a_{2h}}(D_{2h-1})} t_{D'_{2h}} t_{c_{2n-1}}, 
\end{align*}
\end{lem}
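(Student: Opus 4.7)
The plan is to exhibit, for each of the $2(2h+n-1)$ one-handles $\alpha_i^{\ast}, \beta_i^{\ast}$ produced by the $\Sigma_{2h+n-1}\times \mathbb{D}^2$ piece of the Kirby diagram (as reviewed in Section~\ref{KirbydiagramLF}), a $(-1)$-framed $2$-handle coming from a vanishing cycle of $W \cdot W'$ which, possibly after a sequence of handle slides, geometrically intersects that $1$-handle exactly once. Once every $1$-handle is paired with such a canceling $2$-handle, the canceling operation shown in Figure~\ref{cancelingoperation0} removes all of them simultaneously.

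First, I would redraw the whole diagram on $\Sigma_{2h+n-1}-D$ as in Section~\ref{KirbydiagramS}, so that the $1$-handles appear as dotted arcs dual to the standard generators of $\pi_1(\Sigma_{2h+n-1}-D,p)$. The monodromy $W\cdot W'$ has a deliberately asymmetric structure: both $W$ and $W'$ contain the middle word $t_{c_{2n-2}}\cdots t_{c_1} t_{c_1}\cdots t_{c_{2n-2}}$ and end with $t_{c_{2n-1}}$, but the ``$D$-chain'' inside $W'$ is obtained from that of $W$ by pre-composing each $D_j$ with twists along the $a_i$'s. This asymmetry is precisely what should allow the two halves of the monodromy to cancel $1$-handles on the two distinct copies of $\Sigma_h^1$: the curves $D_0,\ldots,D_{2h}$ supplied by $W$ are arranged to cancel the $1$-handles in the rightmost $\Sigma_h^1$ (where $\Phi_K=\mathrm{id}$), while the twisted curves $t_{a_1}(D_0), t_{a_2}(D_1), \ldots, t_{a_{2h}}(D_{2h-1})$ and $D_{2h}'$ coming from $W'$, thanks to their extra $a_j$-twists, should cross each of the dotted arcs on the leftmost $\Sigma_h^1$ exactly once.

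For the $1$-handles lying inside the middle $\Sigma_{n-1}^2$, I would use the $c_i$-curves together with those $D_j$-curves that traverse the middle. A telescoping sequence of slides of $c_i$ over $c_{i-1}$ (and, when necessary, over a suitable $D_j$) should reduce each $c$-handle selected for cancellation to a simple arc crossing a single dotted arc, hence canceling it. Unused $c$- and $D$-handles at the end of this process remain as ordinary $2$-handles in the resulting decomposition and do not affect the absence of $1$-handles.

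The hard part will be the left-hand $\Sigma_h^1$: the curves $t_{a_j}(D_{j-1})$ and $D_{2h}' = t_{a_1}\cdots t_{a_{2h}}(D_{2h})$ are not drawn directly on the fixed handle decomposition of $\Sigma_{2h+n-1}$, so I must first compute their isotopy classes on $\Sigma_{2h+n-1}-D$ and then exhibit an explicit sequence of handle slides transforming each of them into a curve that crosses exactly one dotted arc on the left $\Sigma_h^1$. I expect this step to proceed by induction along the $a_j$-chain, using the identity $t_{\phi(c)}=\phi t_c\phi^{-1}$ to reinterpret vanishing cycles after slides, and making essential use of the explicit shape $\Phi_{T_{2,2h+1}} = t_{a_{2h}}^{-1}\cdots t_{a_1}^{-1}$ of the torus-knot monodromy; this is where the specific structure of $(2,2h+1)$-torus knots (as opposed to general fibered knots) becomes indispensable.
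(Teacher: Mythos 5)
Your overall strategy---redraw the diagram on $\Sigma_{2h+n-1}-D$, find for each dotted arc a $2$-handle that, after slides, meets it geometrically once, and then cancel---is the same as the paper's, and your treatment of the middle $\Sigma_{n-1}^2$ via the $c_i$'s matches the paper's first cancellation step (although even there no slides among the $c_i$'s are needed: one cancels $(\alpha^\ast,h_{c})$ pairs in a specific order, each pair becoming a canceling pair only after the previously removed $1$-handles are gone). The genuine gap is exactly where you flag ``the hard part,'' together with a misstatement one step earlier. The curves $D_0,\dots,D_{2h}$ do \emph{not} individually cancel the $1$-handles of the second copy of $\Sigma_h^1$: each $D_j$ in the Gurtas configuration of Figure~\ref{gurtas} traverses essentially the whole surface and crosses many dotted arcs, so no single $h_{D_j}$ is a canceling partner for anything. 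What is actually needed is a telescoping sequence of slides \emph{among the $D$-handles themselves}: sliding $h_{D_{j+1}}$ over $h_{D_j}$ for $j=2h-1,\dots,0$ produces $2h$ new handles, each localized around a single $1$-handle of that $\Sigma_h^1$, with the leftover $h_{D_0}$ simply discarded as an ordinary $2$-handle. You propose telescoping only for the $c_i$'s.

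For the twisted half, the missing idea is the one concrete move that is the heart of the proof: slide each $h_{t_{a_{i+1}}(D_i)}$ over its untwisted partner $h_{D_i}$ supplied by the other copy of $W$. Since $t_{a_{i+1}}(D_i)$ and $D_i$ differ by a single Dehn twist along $a_{i+1}$ and $a_{i+1}$ meets $D_i$ once, the resulting attaching circle is isotopic to a parallel copy of $a_{i+1}$; it therefore crosses exactly one dotted arc of the first $\Sigma_h^1$ and cancels the corresponding $1$-handle. Doing this for $i=0,\dots,2h-1$ kills all $2h$ of those $1$-handles (note that $D_{2h}'=t_{a_1}\cdots t_{a_{2h}}(D_{2h})$ is never used as a canceling partner; only the $2h$ curves $t_{a_{i+1}}(D_i)$ are). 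Your plan instead hopes that the twisted curves themselves, ``thanks to their extra $a_j$-twists,'' already cross the relevant dotted arcs once---they do not, for the same reason the $D_j$ do not---and defers the repair to an unspecified induction along the $a_j$-chain using $t_{\phi(c)}=\phi t_c\phi^{-1}$. Without identifying the slide of each twisted $D$-handle over its untwisted partner, the consecutive slides among the untwisted $D$-handles, and the order in which the resulting pairs become canceling pairs as earlier $1$-handles are removed, the argument does not close.
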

We give a proof of Lemma~\ref{lem:1001} after the proof of Theorem~\ref{thm:1}.

\begin{proof}[Proof of Theorem~\ref{thm:1}]
To shorten notation, we write $K$ instead of a $(2,2h+1)$-torus knot $T_{2,2h+1}$. 
Recall the genus-$(2h+n-1)$ Lefschetz fibration $f : E(n)_{K} \to \mathbb{S}^2$ with the global monodromy ${}_{\Phi_K}(W)^2 \cdot W^2$ defined in Section~\ref{LFonE(n)_K}. 
By applying cyclic permutations to ${}_{\Phi_K}(W)^2 \cdot W^2$, we obtain a genus-$(2h+n-1)$ Lefschetz fibration $f' : E(n)_K \to \mathbb{S}^2$ with the global monodromy ${}_{\Phi_K}(W) \cdot W \cdot W \cdot {}_{\Phi_K}(W)$, which is isomorphic as $f$ (see Section~\ref{LF}). 
By suitably cutting the base $\mathbb{S}^2$ of $f'$ into two disks $D_1$ and $D_2$, we get two genus-$(2h+n-1)$ Lefschetz fibrations $f_1 : X_1 \to D_1$ with the global monodromy ${}_{\Phi_K}(W) \cdot W$ and $f_2 : X_2 \to D_2$ with the global monodromy $W \cdot {}_{\Phi_K}(W)$. 
Since $W \cdot {}_{\Phi_K}(W)$ is obtained by applying cyclic permutations to ${}_{\Phi_K}(W) \cdot W$, $f_2$ is isomorphic to $f_1$. 
Here, since 
\begin{align*}
{}_{\Phi_K^{-1}}({}_{\Phi_K}(W) \cdot W) = W \cdot {}_{\Phi_K^{-1}}(W),
\end{align*}
the genus-$(2h+n-1)$ Lefschetz fibration $f'_1 : X'_1 \to \mathbb{D}^2$ with the global monodromy $W \cdot {}_{\Phi_K^{-1}}(W)$ is isomorphic to $f_1$ (see Section~\ref{LF}). 
Summarizing, $f_1'$ is isomorphic to $f_1$ and $f_2$, and hence $X_1'$ is diffeomorphic to $X_1$ and $X_2$.

By the definition of $\Phi_K (= \Phi_{T_{2,2h+1}})$ (see Section~\ref{LFonE(n)_K}), we see that 
\begin{align*}
\Phi_K^{-1} = t_{a_1} t_{a_2} \cdots t_{a_{2h-1}} t_{a_{2h}}
\end{align*}
in $\Gamma_{2h+n-1}$, where $a_1,a_2,\ldots,a_{2h}$ are the simple closed curves on $\Sigma_h^1 \subset \Sigma_{2h+n-1}$ as in Figure~\ref{gurtas}. 
Since $a_l$ is disjoint from $c_k$ for any $l,k$, we see that $\Phi_K^{-1}(c_k) = c_k$ (see, for example, Figure~\ref{gurtas}). 
Similarly, since $D_j$ is disjoint from $a_l$ for $j+2 \leq l$, we have $\Phi_K^{-1}(D_j) = t_{a_1}t_{a_2} \cdots t_{a_{j+1}}(D_j)$ (see the upper sides of Figures~\ref{gurtas2} and~\ref{gurtas3}). 
Moreover, it is easy to check that $t_{a_{j+1}}(D_j)$ is disjoint from $a_1,a_2,\ldots,a_j$, and hence $\Phi_K^{-1}(D_j) = t_{a_{j+1}}(D_j)$ for $j=0,1,\ldots,2h-1$ (see Figures~\ref{gurtas2} and~\ref{gurtas3}). 
From these observations, we obtain 
\begin{align*}
&W':={}_{\Phi_K^{-1}}(W) \\
&= t_{c_{2n-2}} \cdots t_{c_2} t_{c_1} t_{c_1} t_{c_2} \cdots t_{c_{2n-2}} t_{t_{a_1}(D_0)} t_{t_{a_2}(D_1)} \cdots t_{t_{a_{2h}}(D_{2h-1})} t_{\Phi_K^{-1}(D_{2h})} t_{c_{2n-1}}. 
\end{align*}
This means that the Lefschetz fibration $f$ in Lemma~\ref{lem:1001} is just $f_1'$.

Therefore, since $X_1'$ admits a handle decomposition without 1-handles from Lemma~\ref{lem:1001}, 
$X_1$ and $X_2$ also admit such a handle decomposition, and hence, Theorem~\ref{thm:1} immediately follows from Lemma~\ref{lem:1000}. 
\end{proof}

In the rest of the section, we show Lemma~\ref{lem:1001}. 
%
%
%
Let The $4$-dimensional $\alpha_1^\ast,\beta_1^\ast,\alpha_2^\ast,\beta_2^\ast,\ldots, \alpha_{2h+n-1}^\ast,\beta_{2h+n-1}^\ast$ are as in Figure~\ref{gurtas1handles}.

Let $c_1,c_2,\ldots,c_{2n-1},D_0,D_1,\ldots,D_{2h},t_{a_1}(D_0),t_{a_2}(D_1),\ldots,t_{a_{2h}}(D_{2h-1})$ be simple closed curves on $\Sigma_{2h+n-1}$ as in Figures~\ref{gurtas},~\ref{gurtas2} and~\ref{gurtas3}. 
We take a small open disk $D$ on $\Sigma_{2h+n-1}$ as in this figure. 
Let us consider the Kirby diagram of $X$ ``on $\Sigma_{2h+n-1}-D$'' described in Section~\ref{KirbydiagramS}. 
We set 
\begin{align*}
\mathcal{V} = \{c_{2n-2},\ldots,c_2,c_1,D_0,D_1,\ldots,D_{2h}, t_{a_1}(D_0),t_{a_2}(D_1),\ldots,t_{a_{2h}}(D_{2h-1})\}
\end{align*}
which is the set vanishing cycles of the genus-$(2h+n-1)$ Lefschetz fibration $f: X \to \mathbb{D}^2$ in Lemma~\ref{lem:1001} without overlapping.

We now prove Lemma~\ref{lem:1001}. 
\begin{proof}[Proof of Lemma~\ref{lem:1001}]
We consider the case of $h=3$. 
Let $h_v$ be the corresponding 2-handle of a vanishing cycle $v$ in $\mathcal{V}$.

Let us consider a subdiagram $\mathcal{D}_0$ of the Kirby diagram on $\Sigma_{6+n-1}^1$ of $X$ arising from the Lefschetz fibration $f$ in Lemma~\ref{lem:1000} whose 2-handles are $h_{c_1}, h_{c_2}, \ldots, h_{c_{2n-2}}$, $h_{D_0}, h_{D_1},\ldots,h_{D_6}$ and $h_{t_{a_{1}}(D_0)}, h_{t_{a_{2}}(D_1)}, \ldots, h_{t_{a_6}(D_5)}$ (see Figures~\ref{2handles21} and~\ref{2handles102}). 
Note that $h_{c_{2n-2}}$ is closer to the reader than $h_{t_{a_{i+1}}(D_i)}$ and $h_{D_j}$. 
We slide $h_{t_{a_{i+1}}(D_i)}$ over $h_{D_i}$ for $i=0,1,\ldots,5$ as in Figures~\ref{2handles31} and~\ref{2handles103}, and let $h_{i+1}$ be the resulting 2-handle.
An isotopy gives Figures~\ref{2handles41} and~\ref{2handles104}. 
Then, the resulting diagram $\mathcal{D}_1$ has 2-handles $h_{c_1},h_{c_2},\ldots,h_{c_{2n-2}}$, $h_{D_0}, h_{D_1},\ldots,h_{D_6}$ and $h_{1},h_{2},\ldots,h_6$. 
We next slide $h_{D_{6-j}}$ over $h_{D_{5-j}}$ for $j=0,1,\ldots,5$ as in Figures~\ref{2handles61} and~\ref{2handles106} (for convenience, see Figures~\ref{2handles51} and~\ref{2handles105}, which have the framings of $h_{D_0},h_{D_1},\ldots,h_{D_5}$), and let $h_{5-j,6-j}$ be the resulting 2-handle. 
An isotopy gives Figures~\ref{2handles71} and~\ref{2handles107}. 
Then, the resulting diagram $\mathcal{D}_3$ has $h_{D_0}$, $h_{c_1},h_{c_2},\ldots,h_{c_{2n-2}}$, $h_{1},h_{2},\ldots,h_6$ and $h_{0,1},h_{1,2},\ldots,h_{5,6}$. 
Since in the rest of the proof, we do not use the 2-handle $h_{D_0}$, we consider the diagram $\mathcal{D}_4$ removed $h_{D_0}$ from $\mathcal{D}_3$ (see Figures~\ref{2handles81} and~\ref{2handles108}). 
An isotopy gives Figures~\ref{2handles91} and~\ref{2handles109}.

We now apply handle canceling operations.

The first step is to remove the 1-handles $\alpha^\ast_k$ and $\beta^\ast_i$ for $k=6+1,6+2,\ldots,6+n-1$ ($2h=6$).  
The 2-handles $h_{c_1},h_{c_2},\ldots,h_{c_{2n-2}}$ are as in the left part of Figure~\ref{2handles1003}. 
An isotopy gives the right part of Figure~\ref{2handles1003}. 
Since the pair $(\alpha_{6+n-1}^\ast, h_{c_1})$ is a canceling pair, we remove it. 
Then, the pair $(\alpha_{6+n-2}^\ast, h_{c_3})$ becomes a canceling pair since the 1-handle $\alpha^\ast_{6+n-1}$ was removed, and hence we remove $(\alpha_{6+n-2}^\ast, h_{c_3})$. 
Similarly, the pair $(\alpha_{6+n-3}^\ast, h_{c_5})$ also becomes a canceling pair since we do not have the 1-handle $\alpha^\ast_{6+n-2}$, and hence we remove $(\alpha_{6+n-3}^\ast, h_{c_5})$. 
By repeating this argument, the pairs $(\alpha_{6+n-1},h_{c_1}), (\alpha_{6+n-2},h_{c_3}),\ldots,(\alpha_{6+1},h_{c_{2n-3}})$ are removed. 
Note that the pairs $(\alpha_{6+n-1}^\ast, h_{c_2})$, $(\alpha_{6+n-2}^\ast, h_{c_4}),\ldots,(\alpha_{6+1}^\ast,h_{c_{2n-2}})$ are canceling pairs, we remove them.

In the rest of the proof, we refer to Figures~\ref{2handles91} and~\ref{2handles109}.

The second step is to remove the 1-handles $\alpha^\ast_1,\alpha^\ast_2,\alpha^\ast_3$ ($h=3$). 
We see that the pair $(\alpha^\ast_1, h_1)$ is a canceling pair by an isotopy (see, for example, Figure~\ref{2handles92}), and therefore, we remove it. 
Then, the pair $(\alpha^\ast_2,h_3)$ becomes a canceling pair by choosing an isotopy. 
Similarly, the pair $(\alpha^\ast_3,h_5)$ becomes a canceling pair after removing $(\alpha^\ast_2,h_3)$.

The third step is to remove the 1-handles $\beta^\ast_1,\beta^\ast_2,\beta^\ast_3$ ($h=3$). 
For each $i=1,2,3$, by choosing an isotopy, the pair $(\beta^\ast_i,h_2i)$ becomes a canceling pair. 
Therefore, we remove $(\beta^\ast_1,h_2), (\beta^\ast_2,h_4), (\beta^\ast_3,h_6)$.

The fourth step is to remove the 1-handles $\alpha^\ast_4, \alpha^\ast_5, \alpha^\ast_6$ ($2h=6$). 
Since the 1-handle $\alpha^\ast_1$ was already removed, the pair $(\alpha^\ast_6, h_{0,1})$ becomes a canceling pair by an isotopy. 
Therefore, we remove it, and then, the pair $(\alpha^\ast_5, h_{2,3})$ becomes a canceling pair by choosing an isotopy since the 1-handles $\alpha^\ast_1,\alpha_2,\alpha_6$ were already removed. 
Similarly, the pair $(\alpha^\ast_4, h_{4,5})$ becomes a canceling pair by an isotopy since the 1-handles $\alpha^\ast_2,\alpha^\ast_3,\alpha^\ast_5$ were removed.

The final step is to remove the 1-handles $\beta^\ast_4, \beta^\ast_5, \beta^\ast_6$ ($2h=6$). 
Since the 1-handle $\beta^\ast_i$ was already removed for $i=1,2,3$, each pair $(\beta^\ast_{7-i},h_{2i-1,2i})$ becomes a canceling pair by an isotopy. 
Therefore, we remove the pairs $(\beta^\ast_4,h_{5,6}), (\beta^\ast_3,h_{3,4}), (\beta^\ast_4,h_{1,2})$.

From the argument above, we obtain a diagram without 1-handles from $\mathcal{D}_3$. 
We note that a Lefschetz fibration over the disk has a handle decomposition without 3- and 4-handles (see Section~\ref{Kirbydiagram}). 
The proof for general $h$ is similar. 
This finishes the proof of Lemma~\ref{lem:1000}. 
\end{proof}

\section*{Acknowledgments} The first author was supported by JSPS KAKENHI Grant Numbers JP20K03613. 
The second author was supported by the JSPS OU-SPRING, and the Public Interest Incorporated Foundation "Ohmoto ikueikai" in carrying out this research.
The authors would like to thank Kenta Hayano, Motoo Tange and Kouichi Yasui for their comments on an earlier version of this paper.


\begin{figure}[hbt]
  \centering
       \includegraphics[scale=.65]{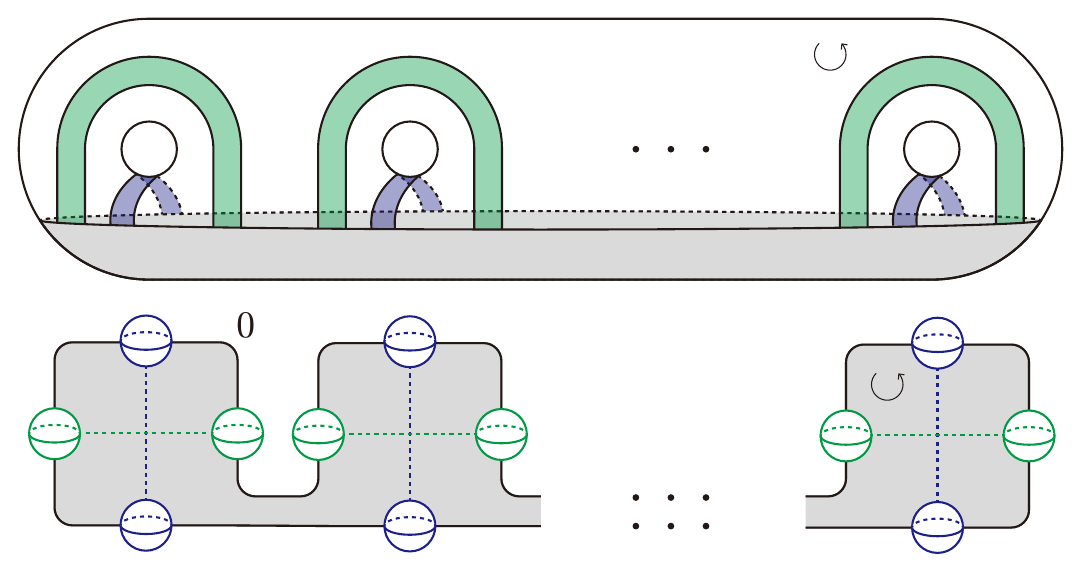}
       \caption{A handle decomposition of $\Sigma_g$ and a Kirby diagram of $\Sigma_g \times \mathbb{D}^2$.}
       \label{HDandKD1}
  \end{figure}

\begin{figure}[hbt]
  \centering
       \includegraphics[scale=.65]{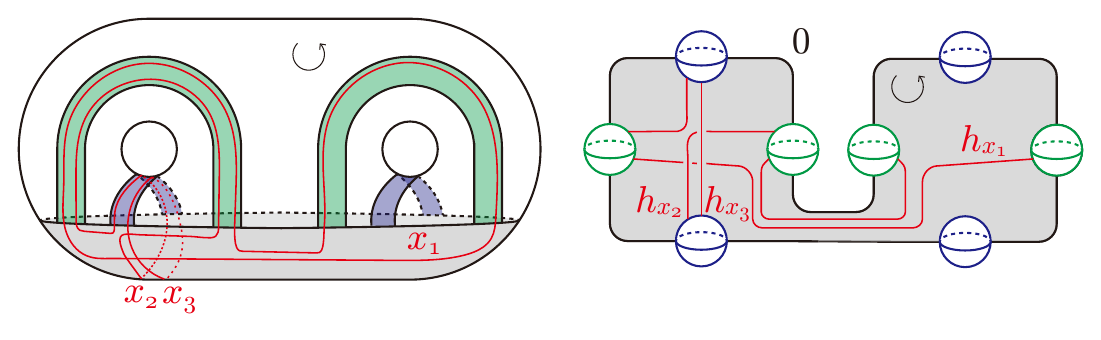}
       \caption{The vanishing cycles $x_1,x_2,x_3$ of the genus-$2$ Lefschetz fibration $f_0$ on $X_0$ and a Kirby diagram of $X_0$.}
       \label{HDandKD2}
  \end{figure}

  
\begin{figure}[hbt]
  \centering
       \includegraphics[scale=.65]{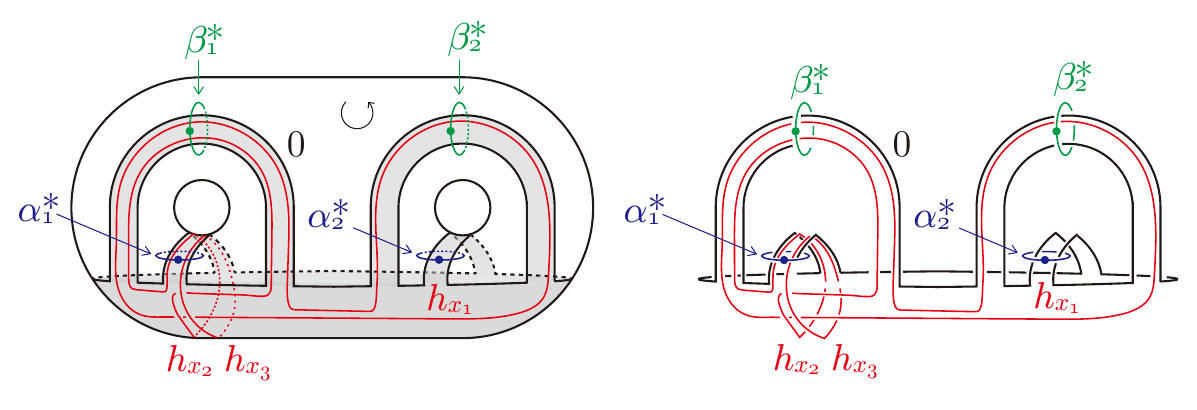}
       \caption{RIGHT: a Kirby diagram ``on $\Sigma_g^1$" of $X_0$, LEFT: a standard Kirby diagram of $X_0$.}
       \label{HDandKD3}
  \end{figure}

\begin{figure}[hbt]
  \centering
       \includegraphics[scale=.65]{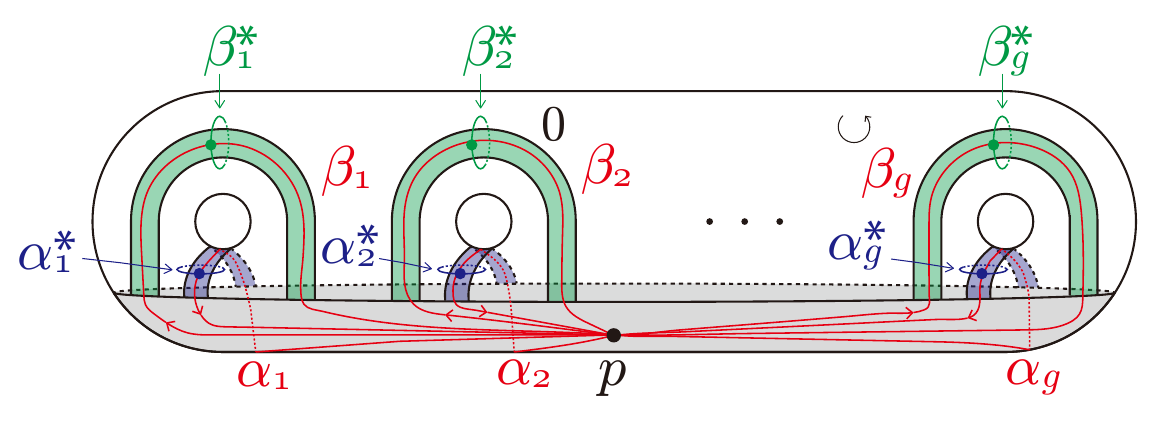}
       \caption{The generators $\alpha_i,\beta_i$ of $\pi_1(\Sigma_g^1,p)$, and the 1-handles $\beta_i^\ast,\alpha_i^\ast$ for $i=1,2,\ldots,g$.}
       \label{1handles}
  \end{figure}

\begin{figure}[hbt]
  \centering
       \includegraphics[scale=.60]{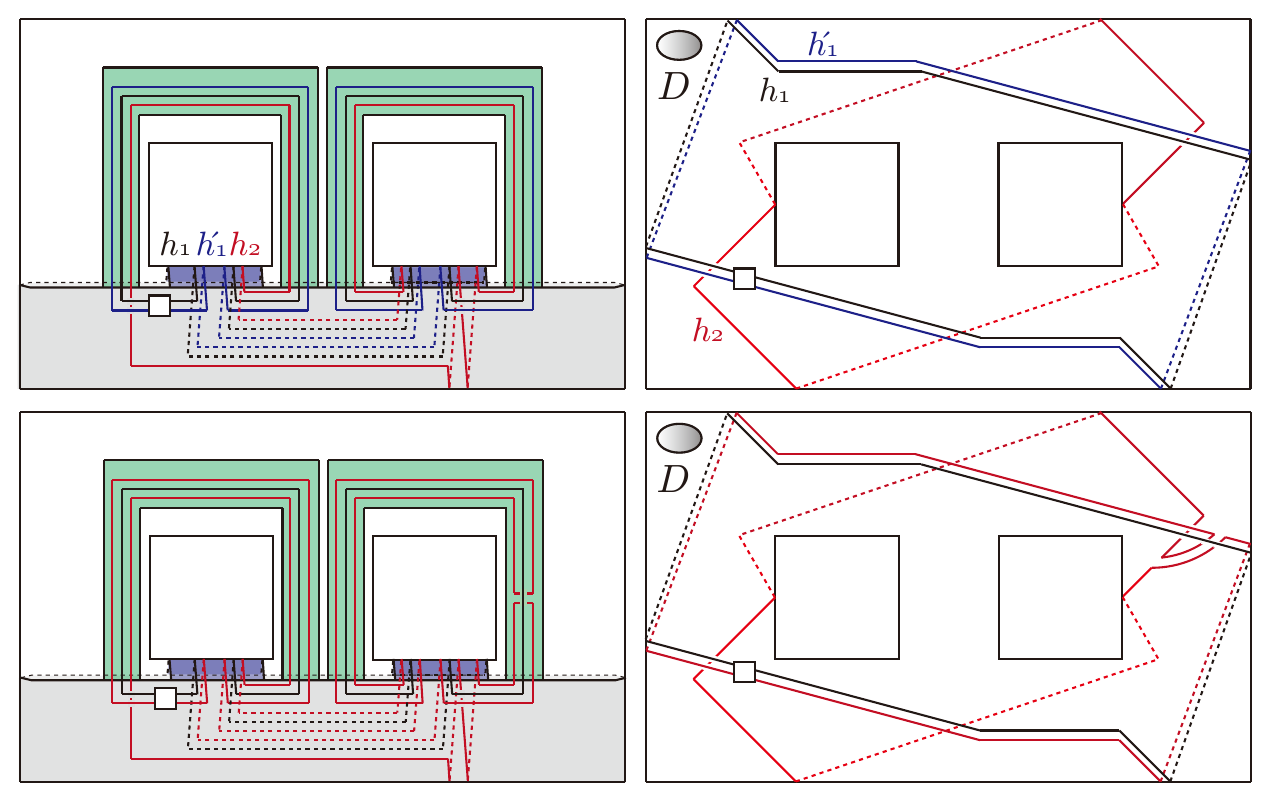}
       \caption{}
       \label{diagram3}
  \end{figure}
  
\begin{figure}[hbt]
  \centering
       \includegraphics[scale=.60]{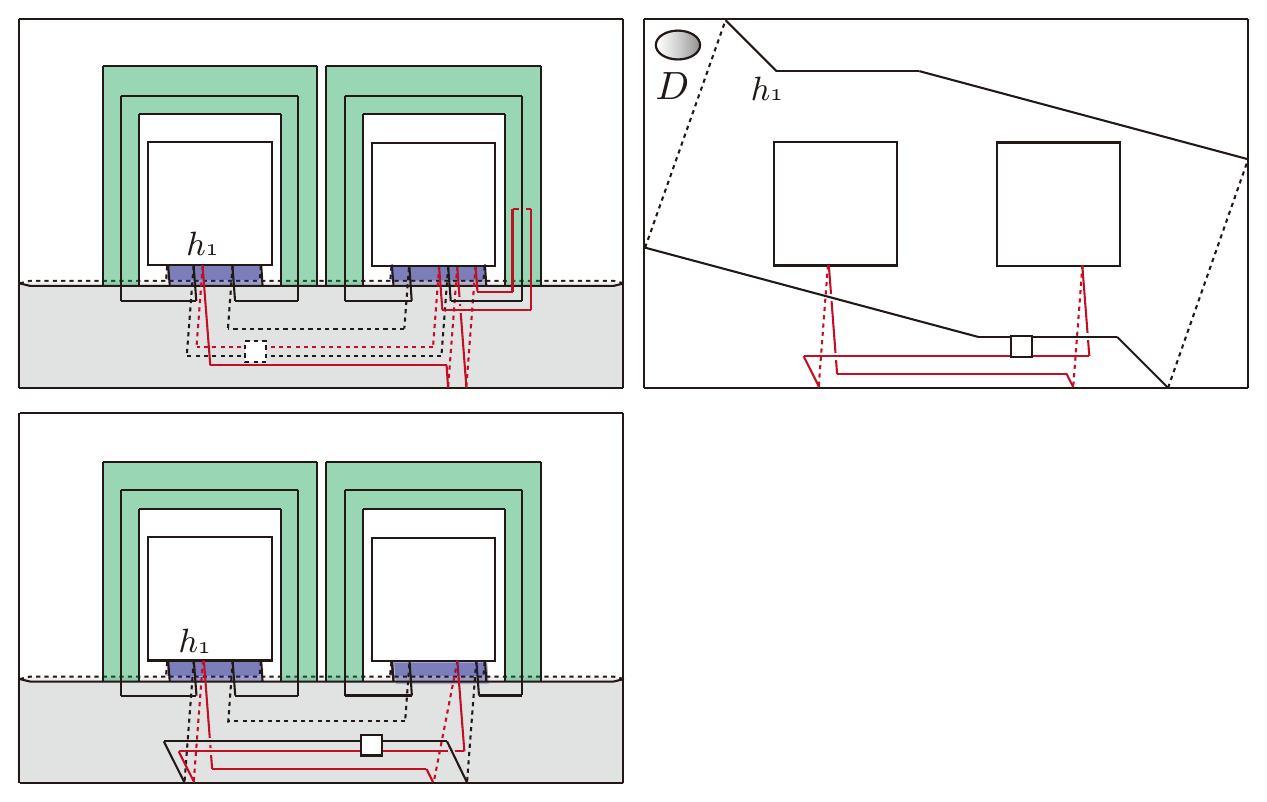}
       \caption{}
       \label{diagram4}
  \end{figure}

\begin{figure}[hbt]
  \centering
       \includegraphics[scale=.70]{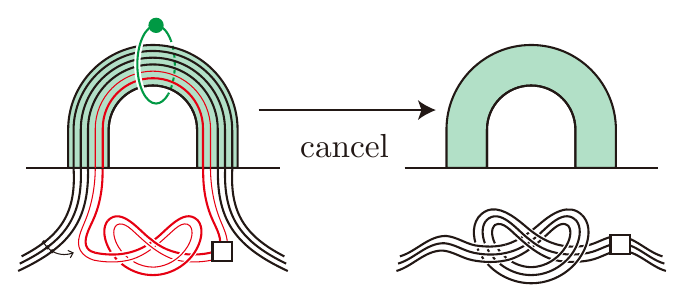}
       \caption{A canceling operation on a Kirby diagram on $\Sigma_g^1$.}
       \label{cancelingoperation1}
  \end{figure}

\begin{figure}[hbt]
  \centering
       \includegraphics[scale=.54]{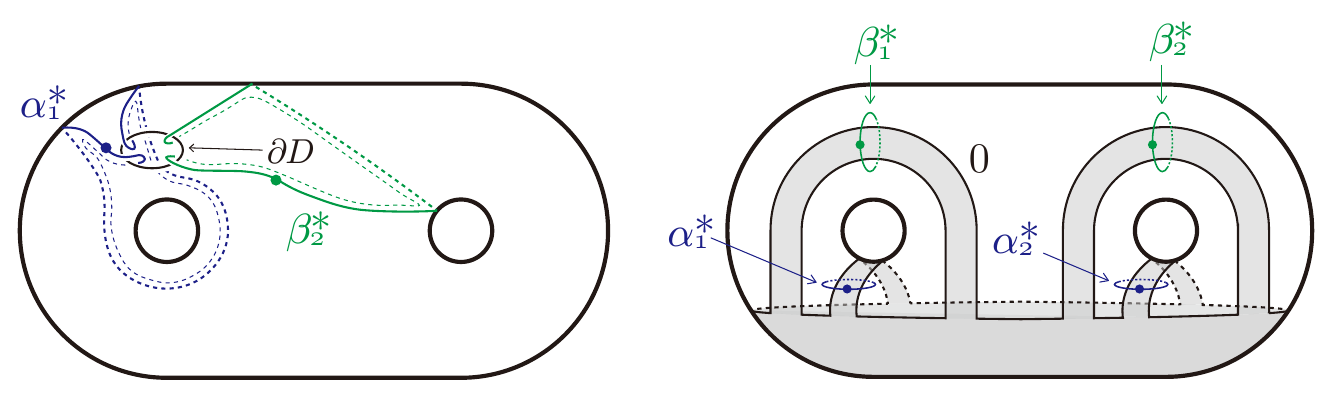}
       \caption{THE LEFT: the deformed 1-handles $\alpha^\ast_1,\beta^\ast_2$, THE RIGHT: the 1-handles $\alpha^\ast_1,\alpha^\ast_2,\beta^\ast_1,\beta^\ast_2$.}
       \label{deform1handle}
  \end{figure}

\

\begin{figure}[hbt]
  \centering
       \includegraphics[scale=.51]{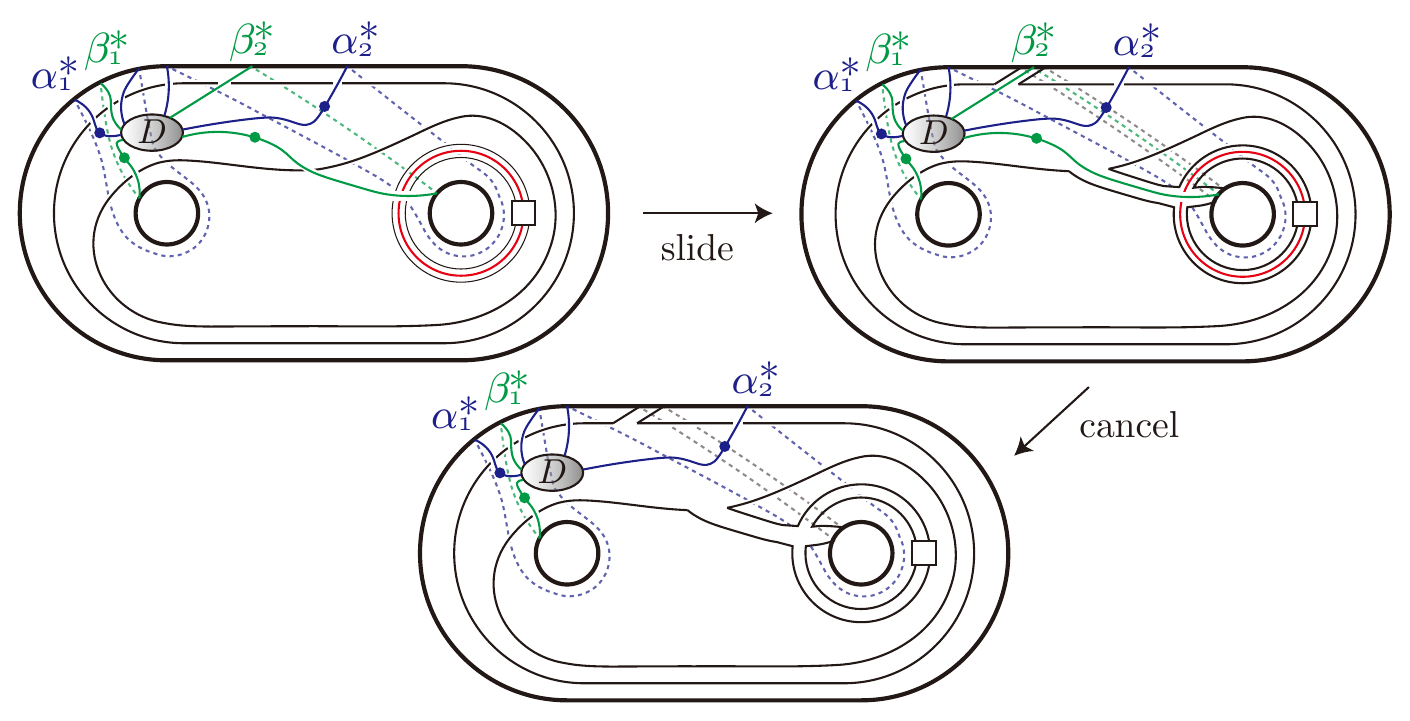}
       \caption{A canceling operation.}
       \label{cancelingoperation0}
  \end{figure}

\begin{figure}[hbt]
  \centering
       \includegraphics[scale=.51]{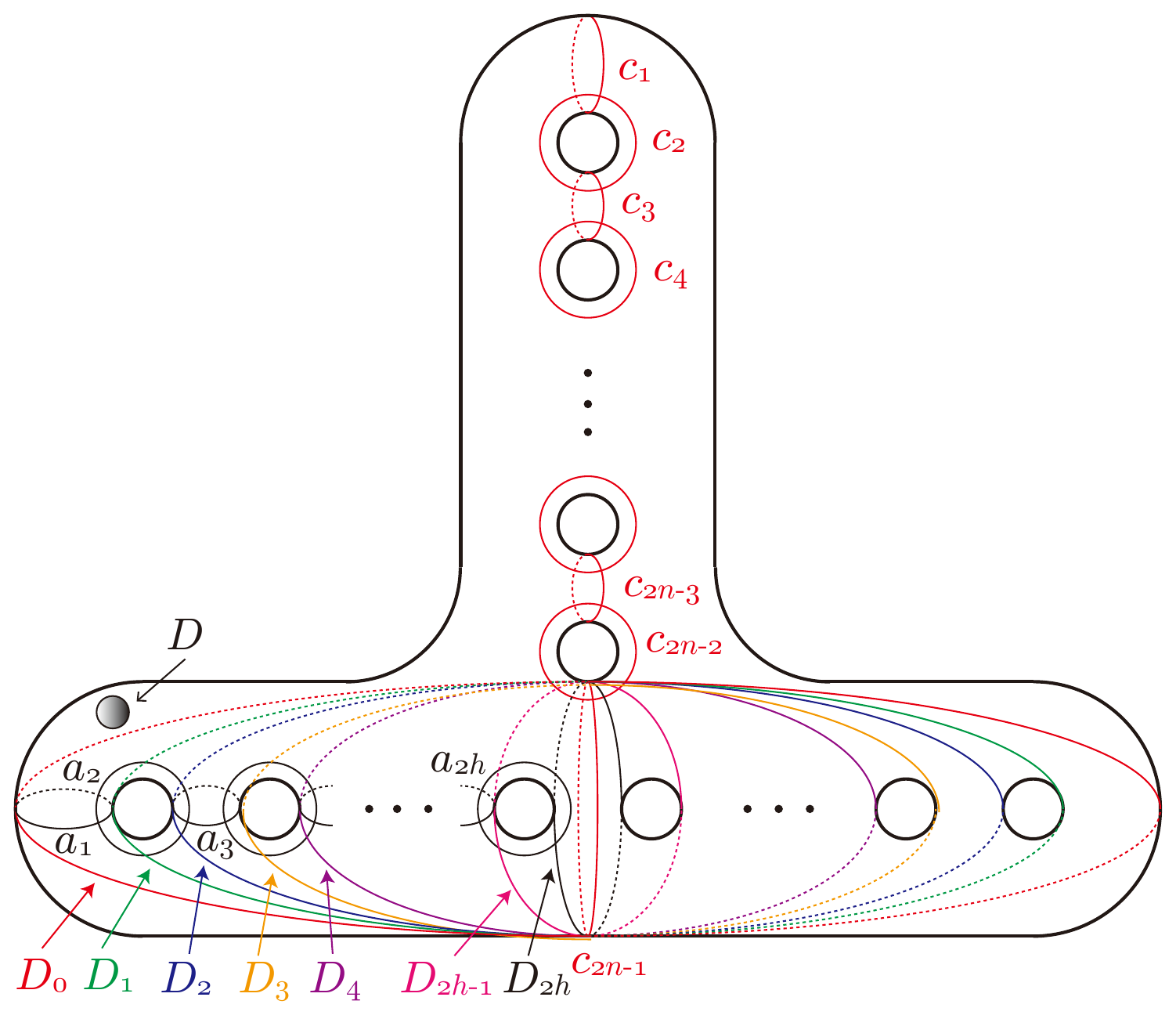}
       \caption{The simple closed curves $c_i,D_j,a_k$ on $\Sigma_{2h+n-1}$ for $i=1,2,\ldots,2n-1$, $j=0,1,\ldots,2h$ and $k=1,2,\ldots,2h$, and the small open disk $D$ on $\Sigma_{2h+n-1}$.}
       \label{gurtas}
  \end{figure}

\begin{figure}[hbt]
  \centering
       \includegraphics[scale=.51]{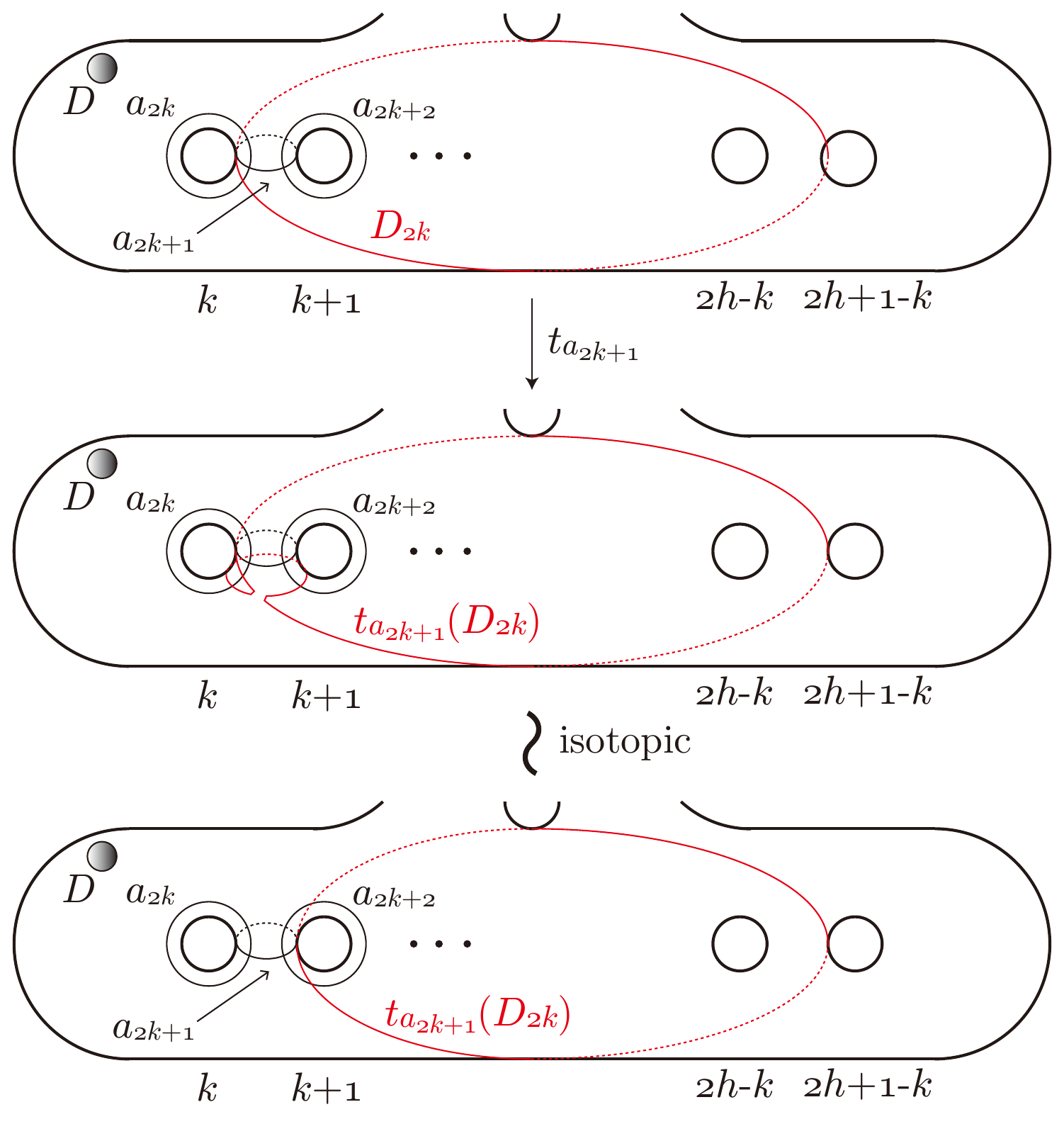}
       \caption{The simple closed curves $D_{2k}$ and $\Phi_K(D_{2k}) = \Phi_{T_{2,2h+1}}(D_{2k}) = t_{a_{2k+1}(D_{2k})}$.}
       \label{gurtas2}
  \end{figure}

\begin{figure}[hbt]
  \centering
       \includegraphics[scale=.51]{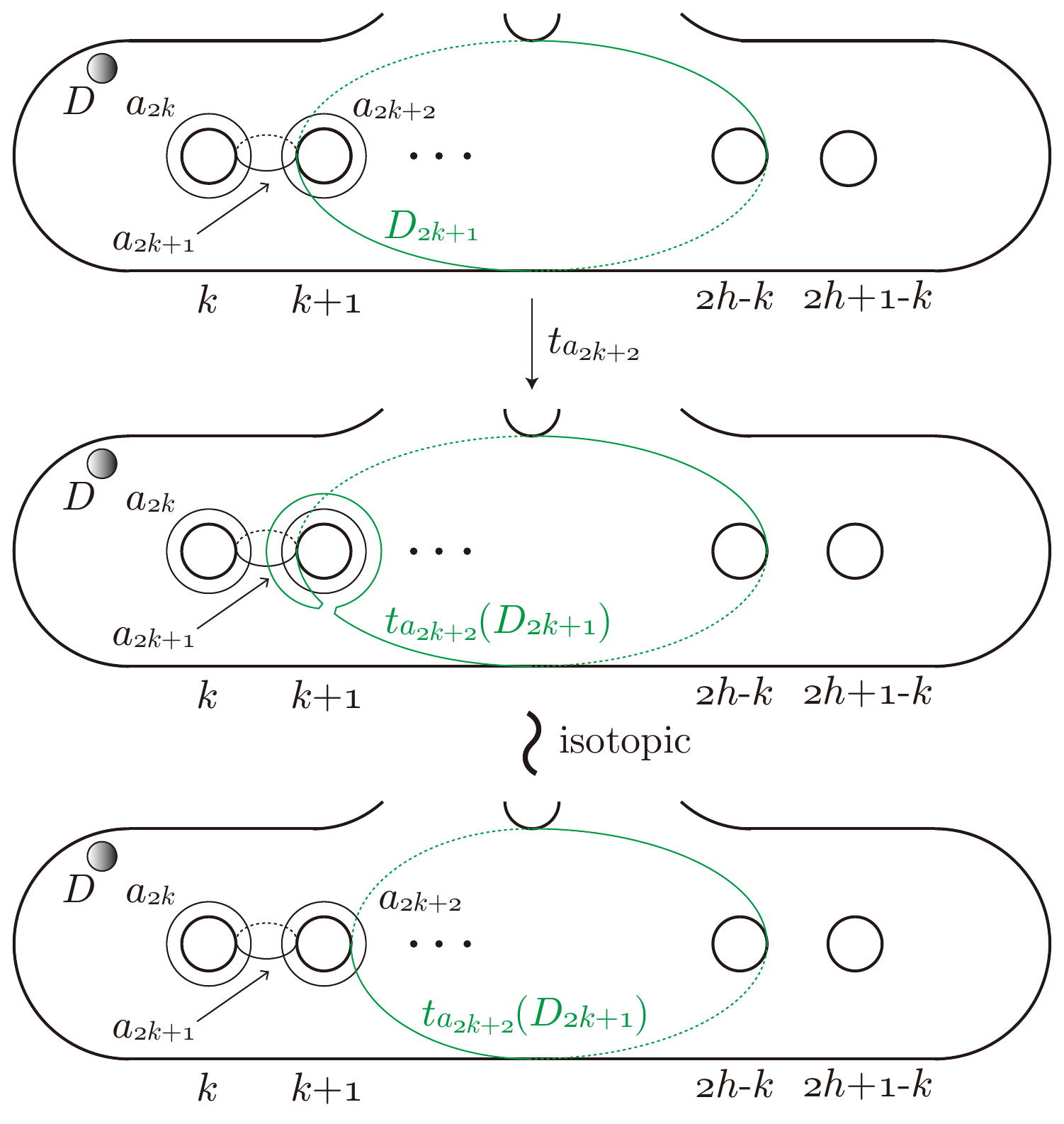}
       \caption{The simple closed curves $D_{2k+1}$ and $\Phi_K(D_{2k+1}) = \Phi_{T_{2,2h+1}}(D_{2k+1}) = t_{a_{2k+2}(D_{2k+1})}$.}
       \label{gurtas3}
  \end{figure}

\begin{figure}[hbt]
  \centering
       \includegraphics[scale=.65]{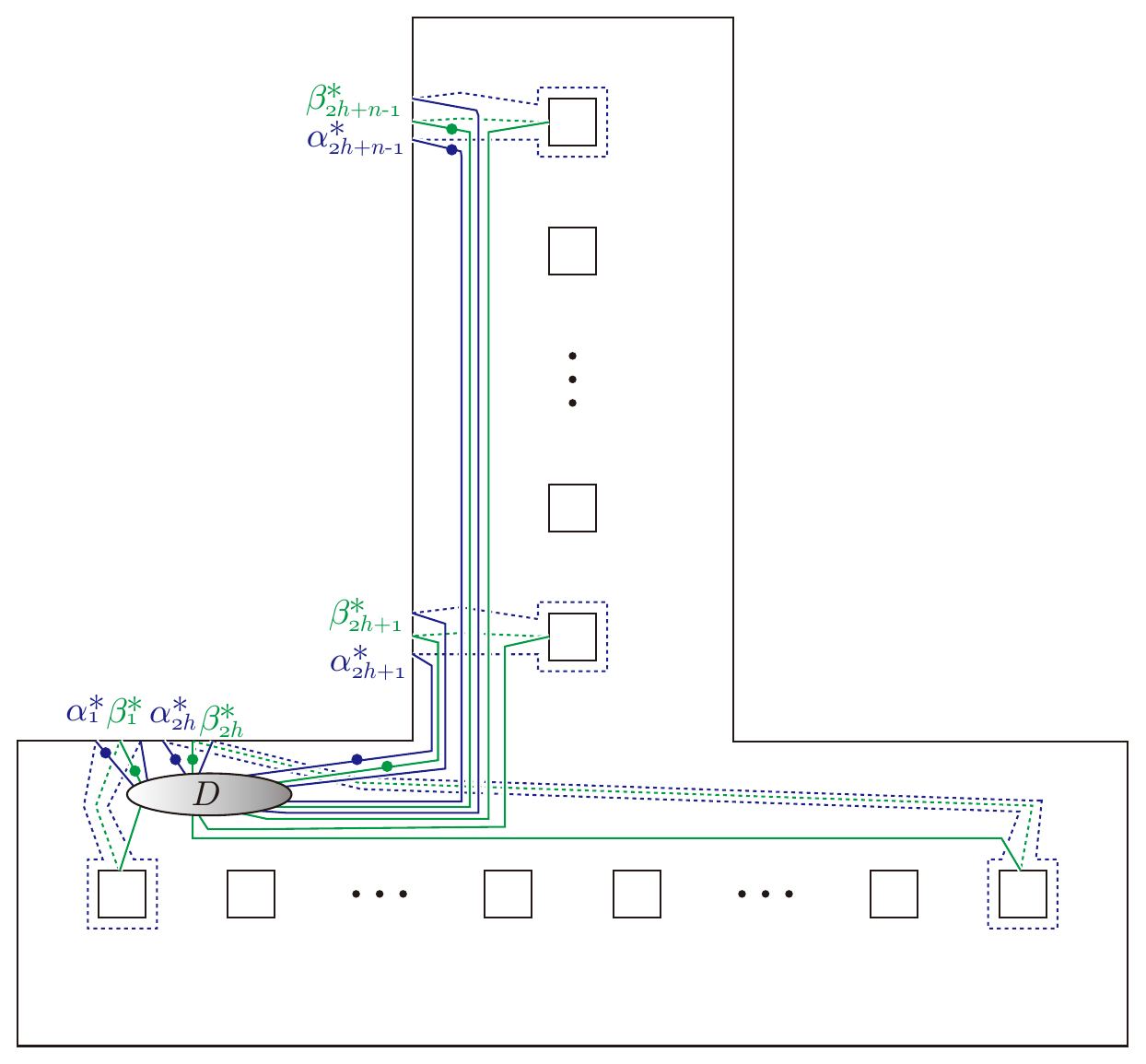}
       \caption{$1$-handles $\alpha_1^\ast,\beta_1^\ast,\alpha_2^\ast,\beta_2^\ast,\ldots, \alpha_{2h+n-1}^\ast,\beta_{2h+n-1}^\ast$.}
       \label{gurtas1handles}
  \end{figure}

\begin{figure}[hbt]
  \centering
       \includegraphics[scale=.70, angle=90]{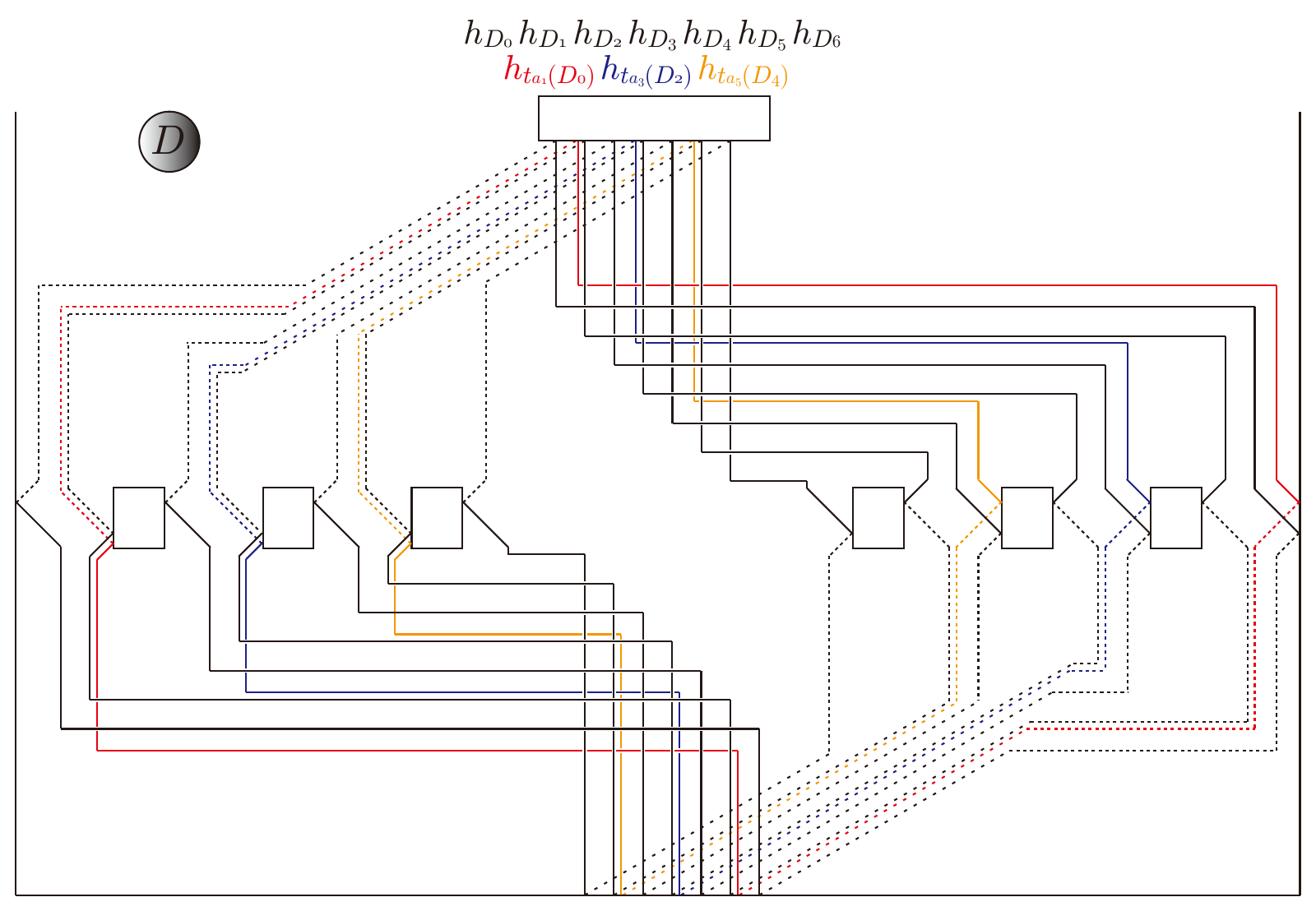}
       \caption{}
       \label{2handles21}
  \end{figure}

\begin{figure}[hbt]
  \centering
       \includegraphics[scale=.70, angle=90]{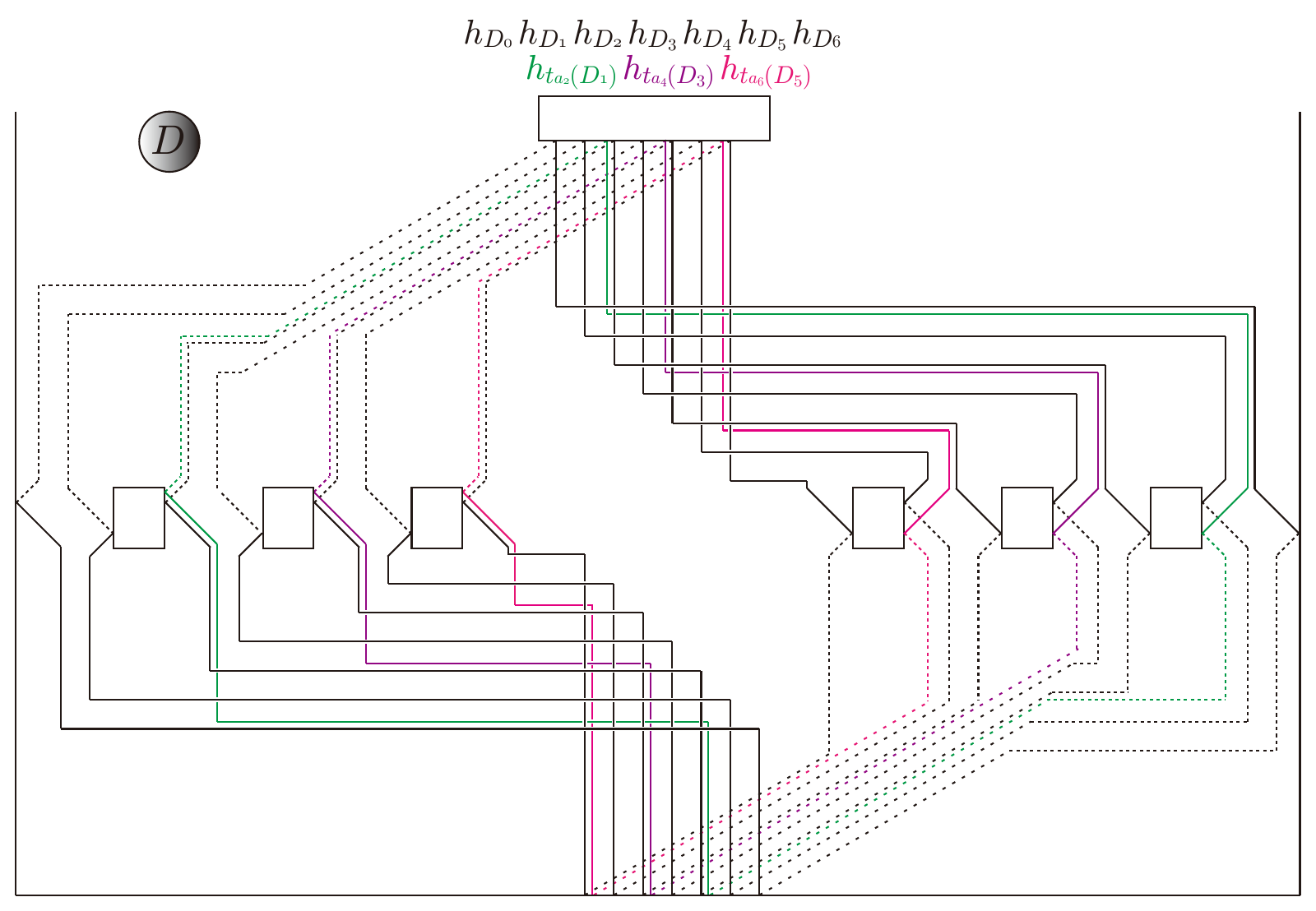}
       \caption{}
       \label{2handles102}
  \end{figure}

\begin{figure}[hbt]
  \centering
       \includegraphics[scale=.70, angle=90]{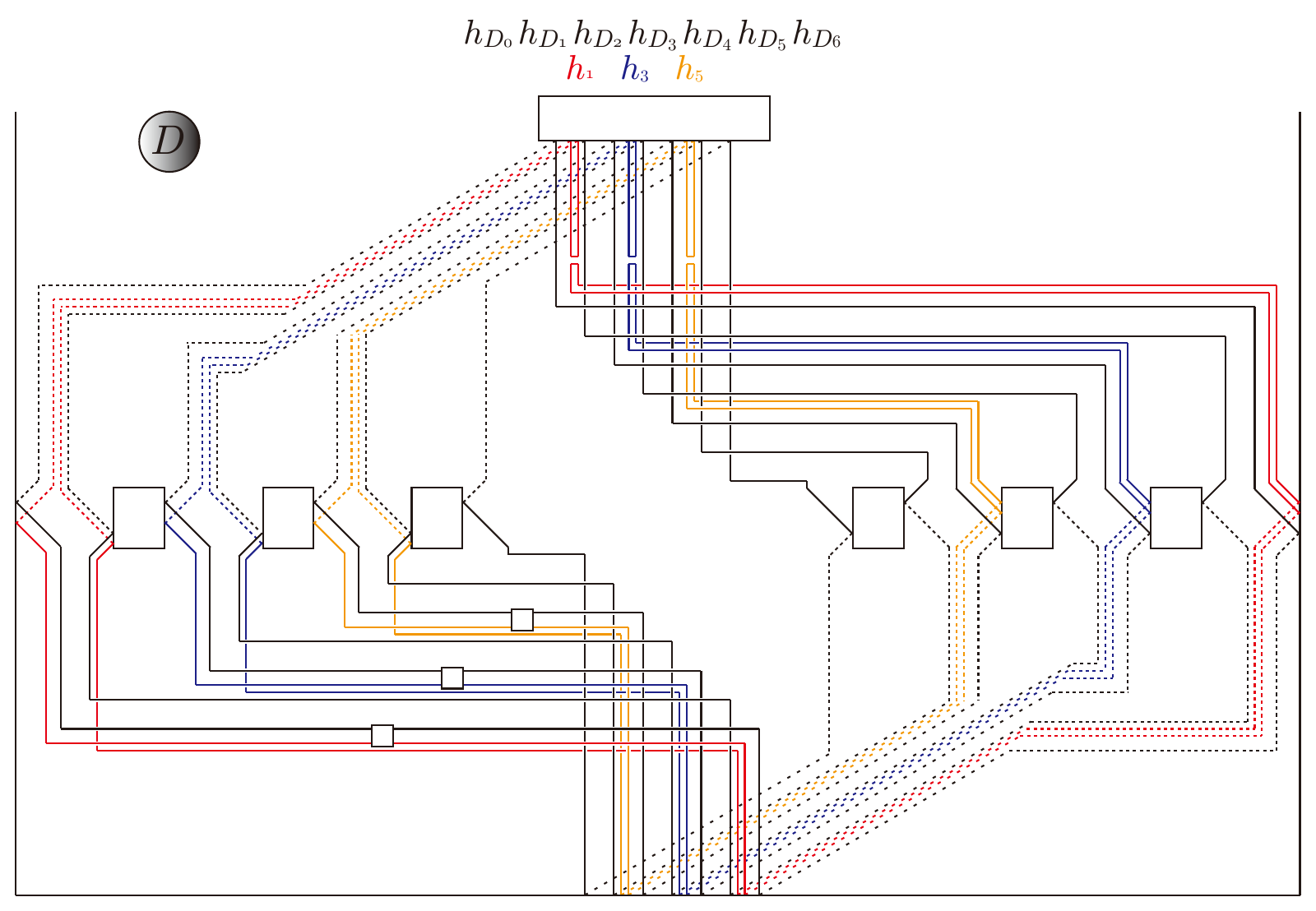}
       \caption{}
       \label{2handles31}
  \end{figure}

\begin{figure}[hbt]
  \centering
       \includegraphics[scale=.70, angle=90]{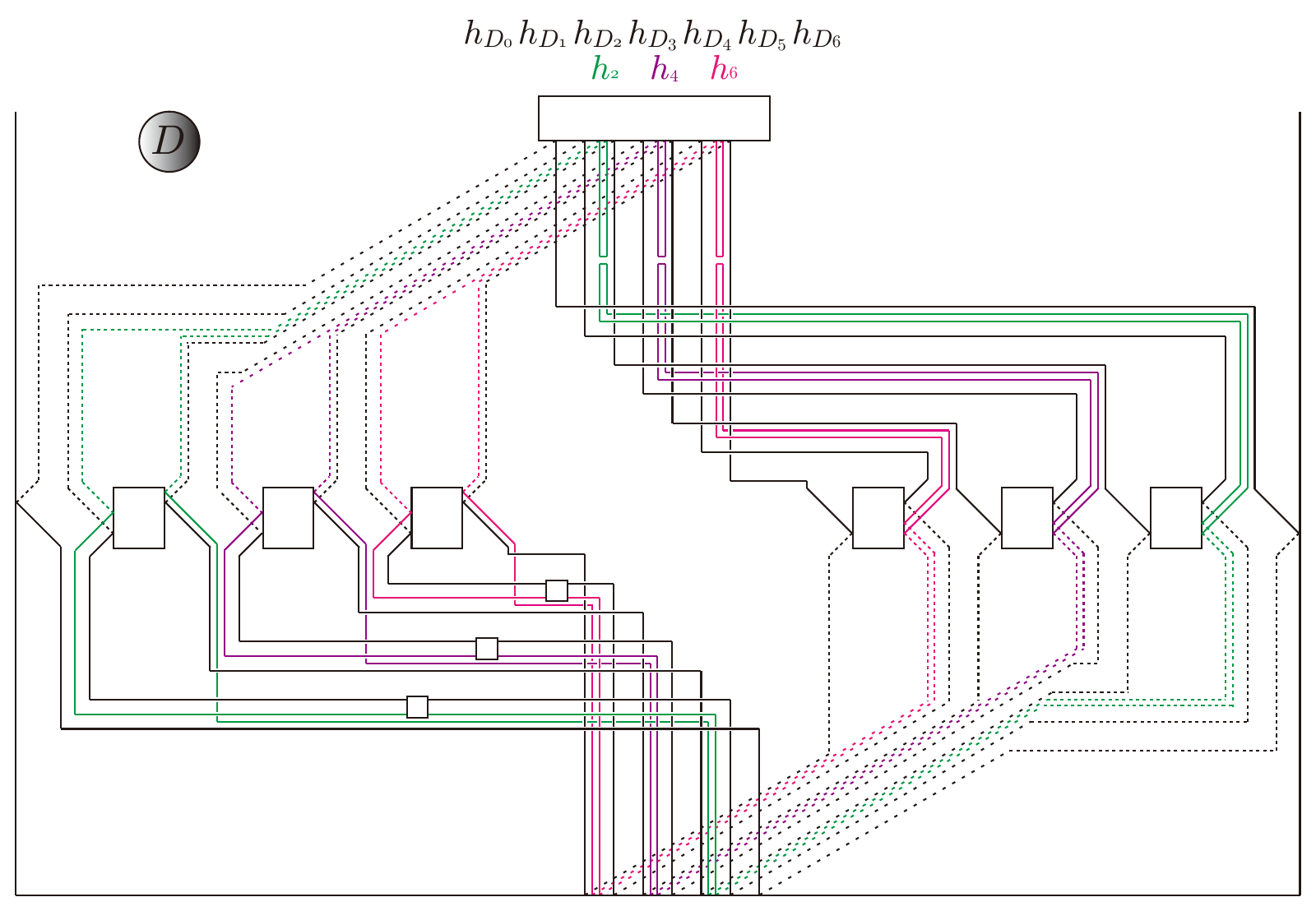}
       \caption{}
       \label{2handles103}
  \end{figure}

\begin{figure}[hbt]
  \centering
       \includegraphics[scale=.70, angle=90]{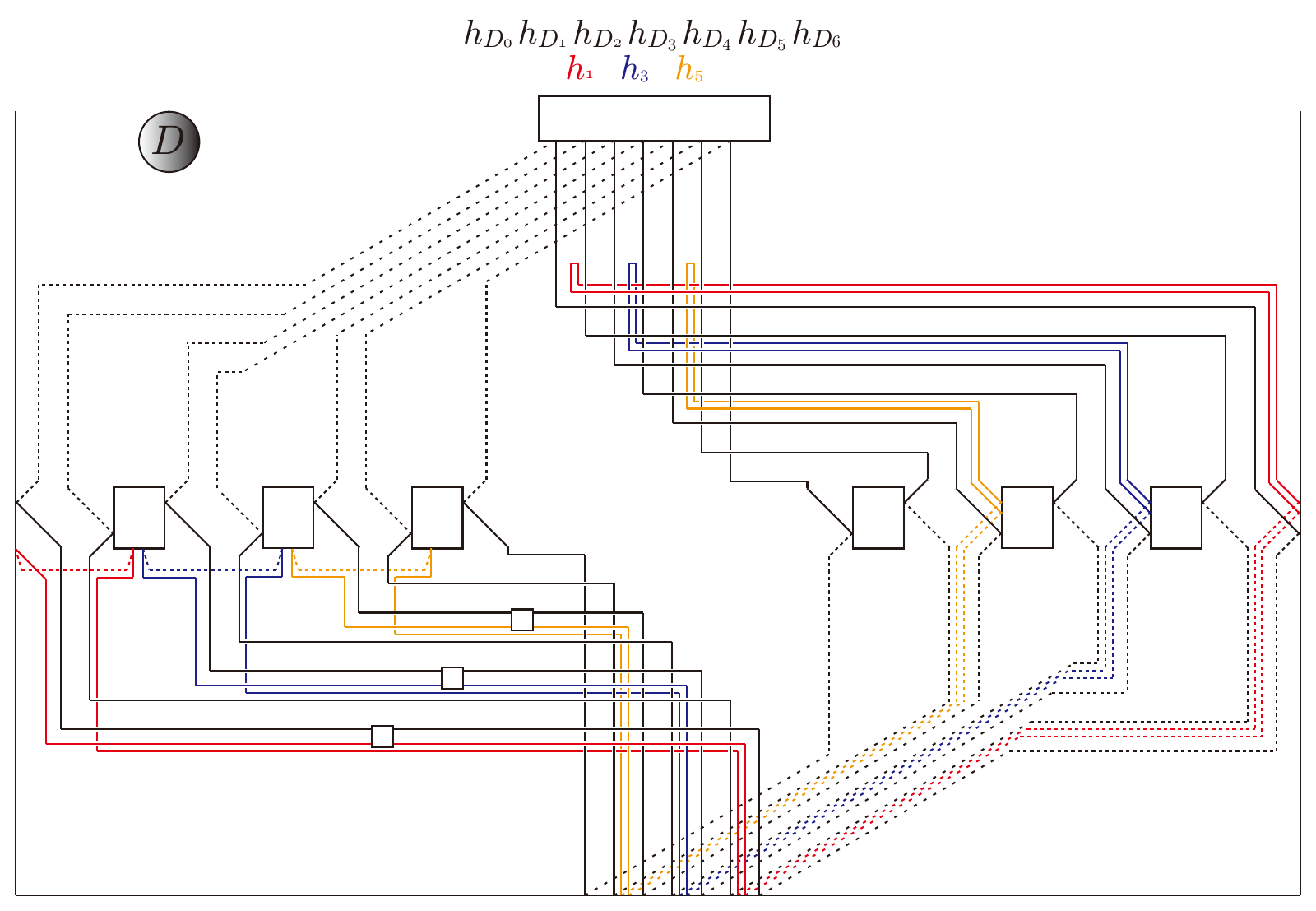}
       \caption{}
       \label{2handles41}
  \end{figure}

\begin{figure}[hbt]
  \centering
       \includegraphics[scale=.70, angle=90]{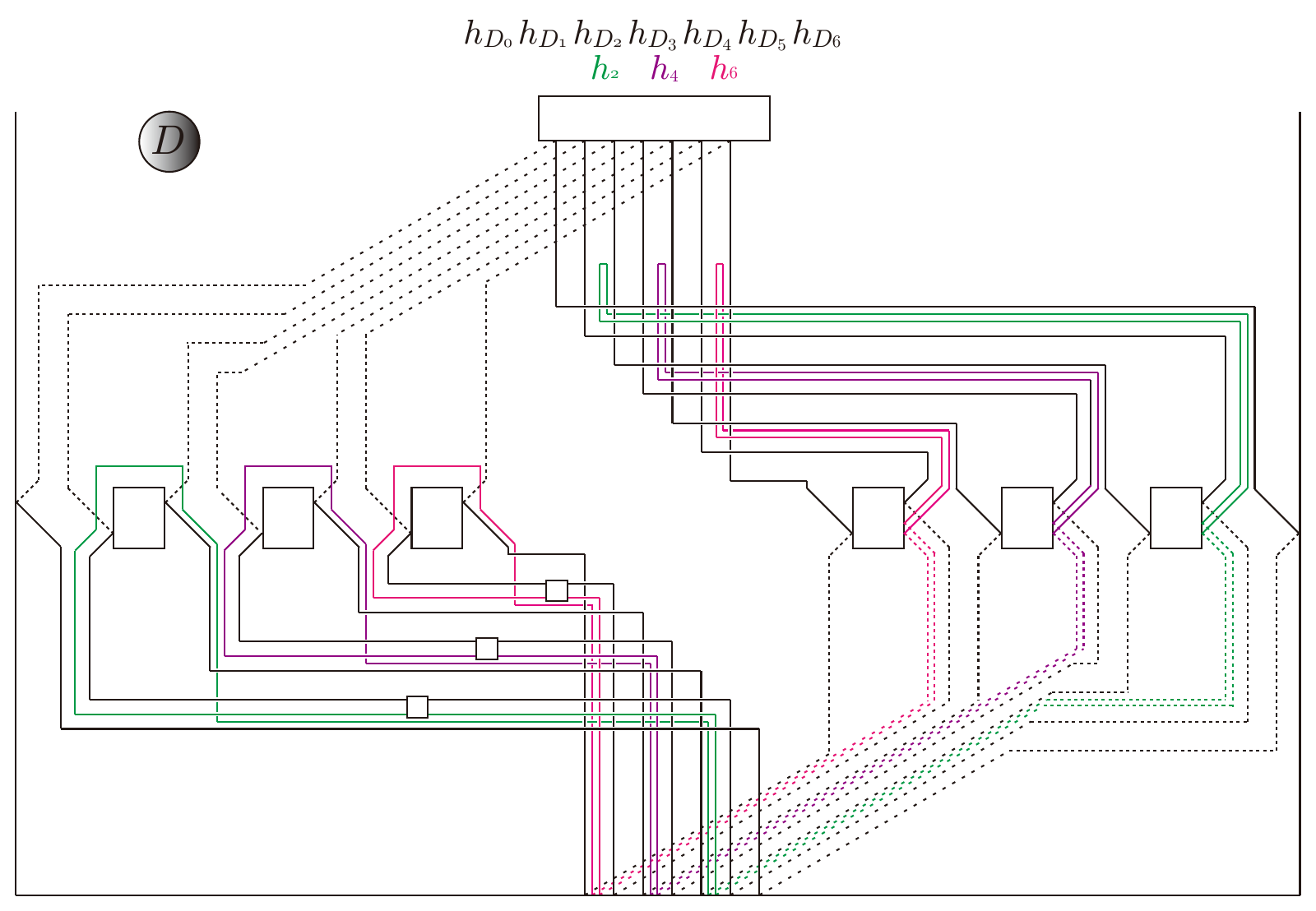}
       \caption{}
       \label{2handles104}
  \end{figure}

\begin{figure}[hbt]
  \centering
       \includegraphics[scale=.70, angle=90]{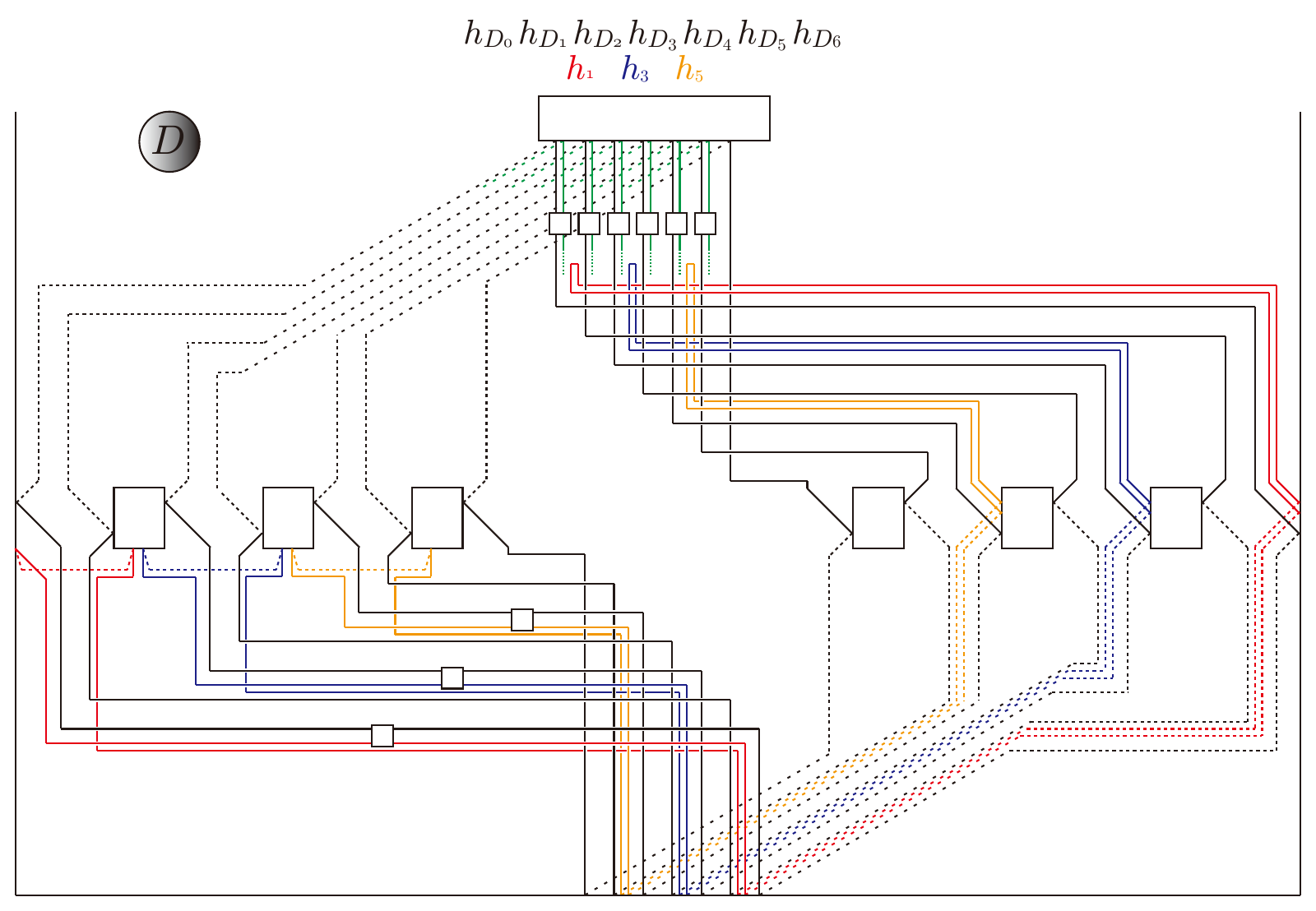}
       \caption{}
       \label{2handles51}
  \end{figure}

\begin{figure}[hbt]
  \centering
       \includegraphics[scale=.70, angle=90]{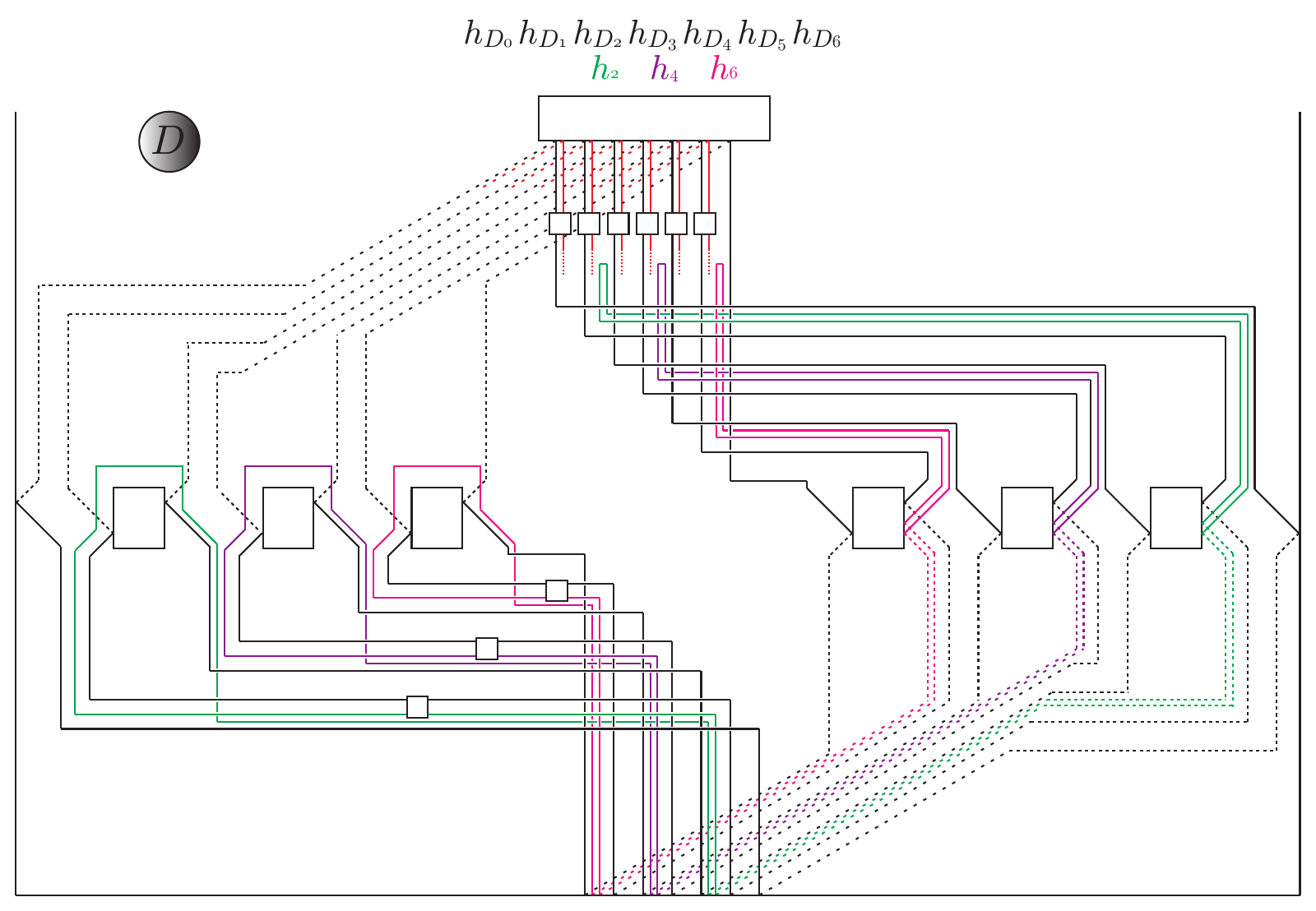}
       \caption{}
       \label{2handles105}
  \end{figure}

\begin{figure}[hbt]
  \centering
       \includegraphics[scale=.70, angle=90]{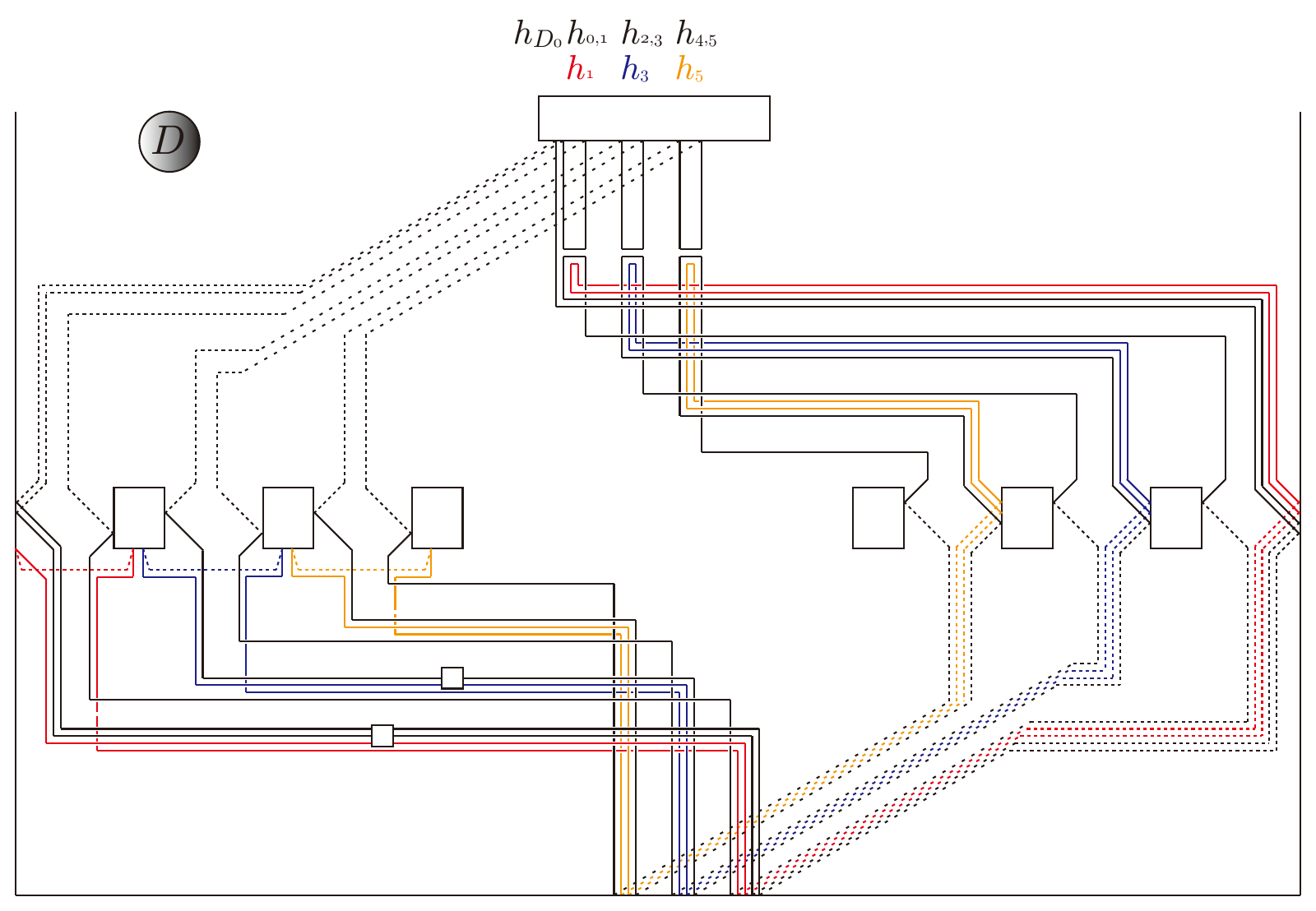}
       \caption{}
       \label{2handles61}
  \end{figure}

\begin{figure}[hbt]
  \centering
       \includegraphics[scale=.70, angle=90]{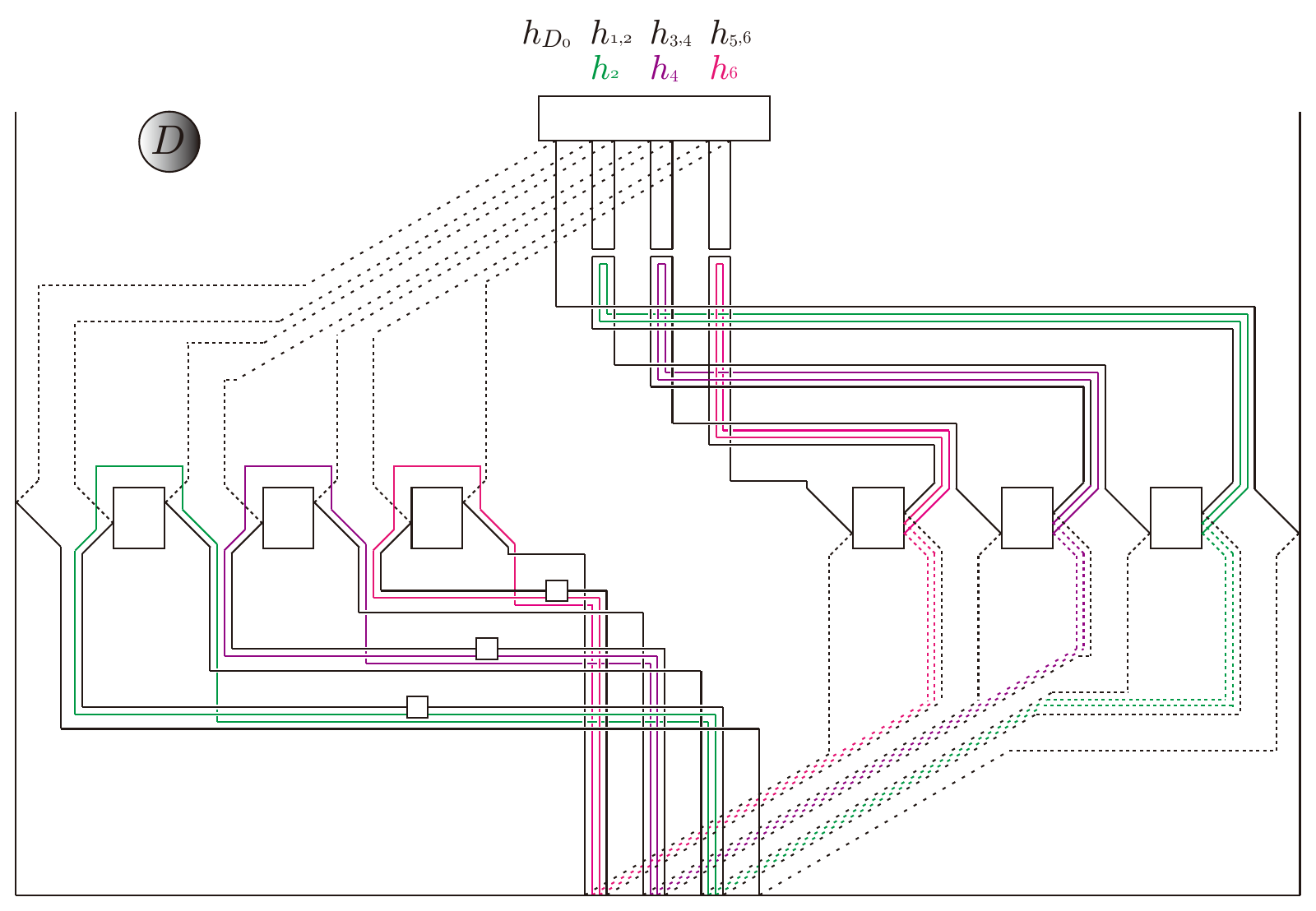}
       \caption{}
       \label{2handles106}
  \end{figure}

\begin{figure}[hbt]
  \centering
       \includegraphics[scale=.70, angle=90]{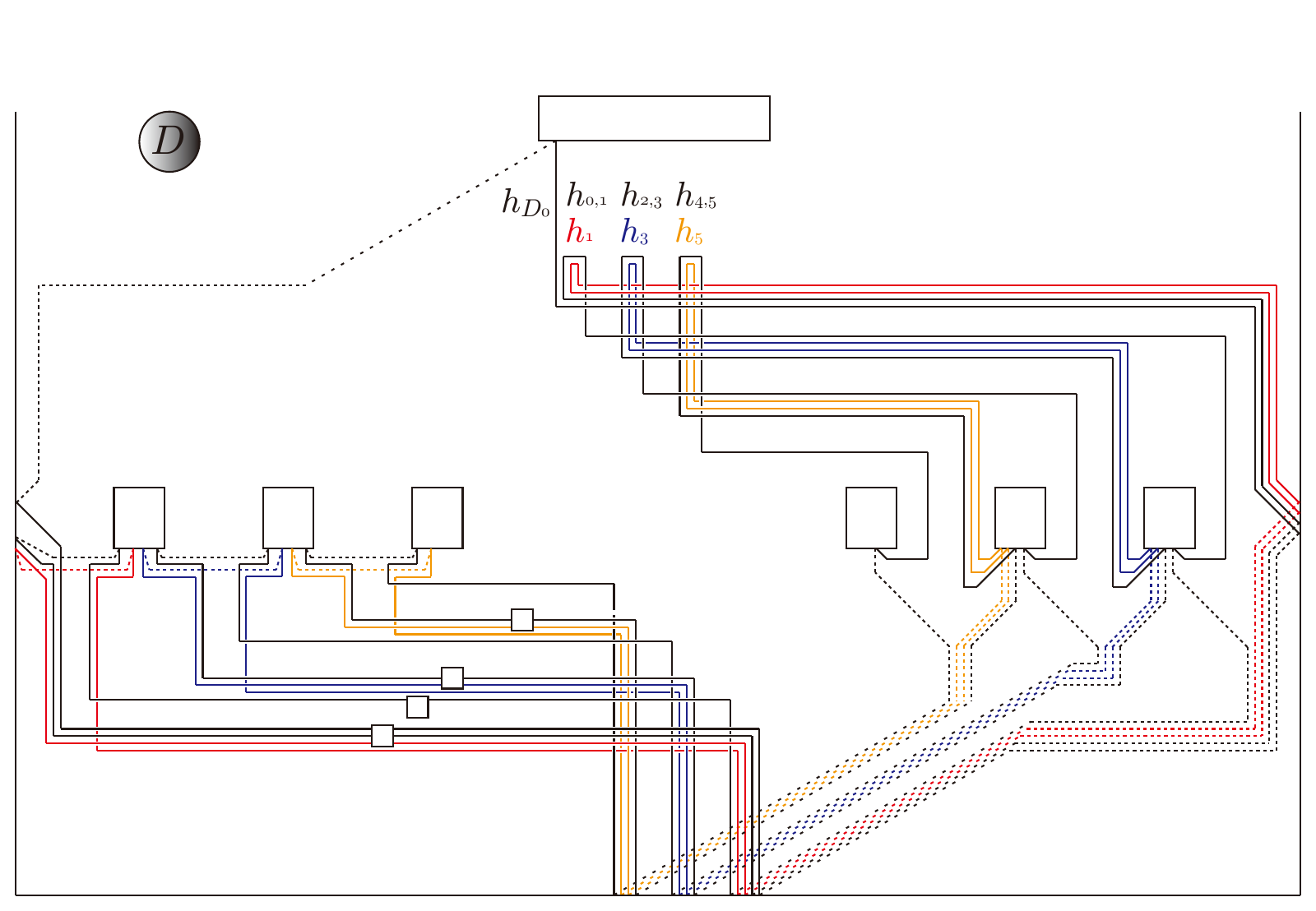}
       \caption{}
       \label{2handles71}
  \end{figure}

\begin{figure}[hbt]
  \centering
       \includegraphics[scale=.70, angle=90]{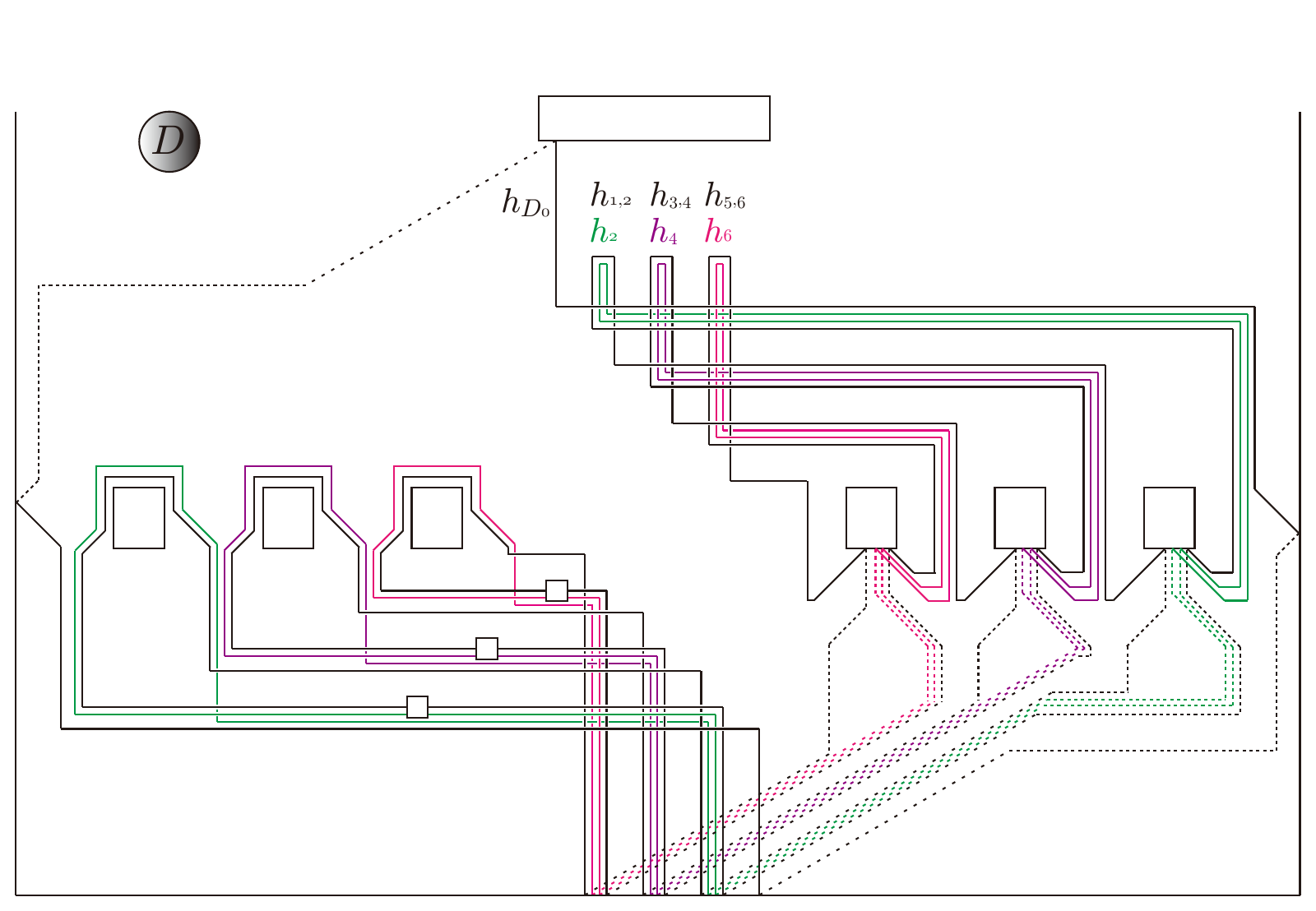}
       \caption{}
       \label{2handles107}
  \end{figure}

\begin{figure}[hbt]
  \centering
       \includegraphics[scale=.70, angle=90]{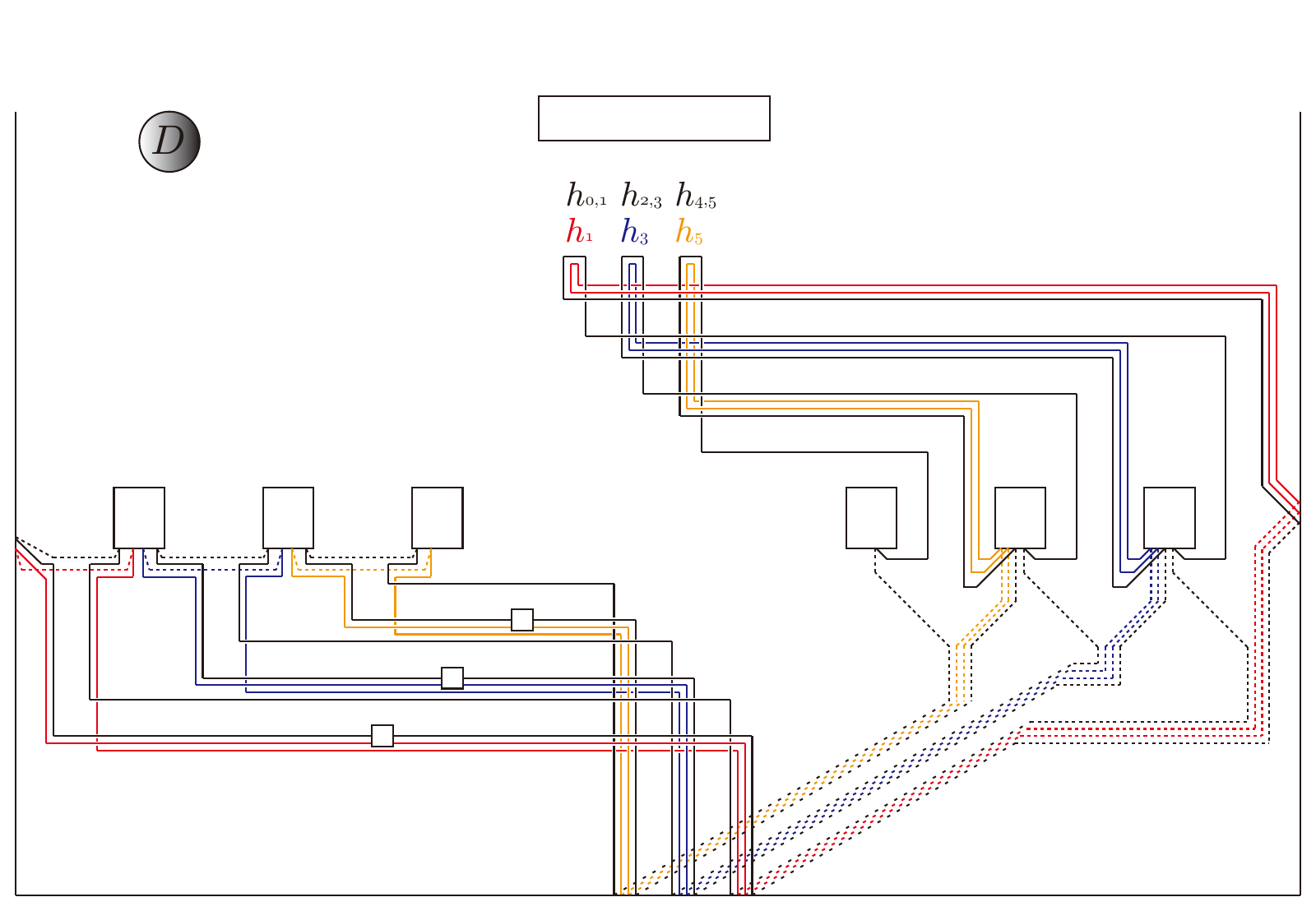}
       \caption{}
       \label{2handles81}
  \end{figure}

\begin{figure}[hbt]
  \centering
       \includegraphics[scale=.70, angle=90]{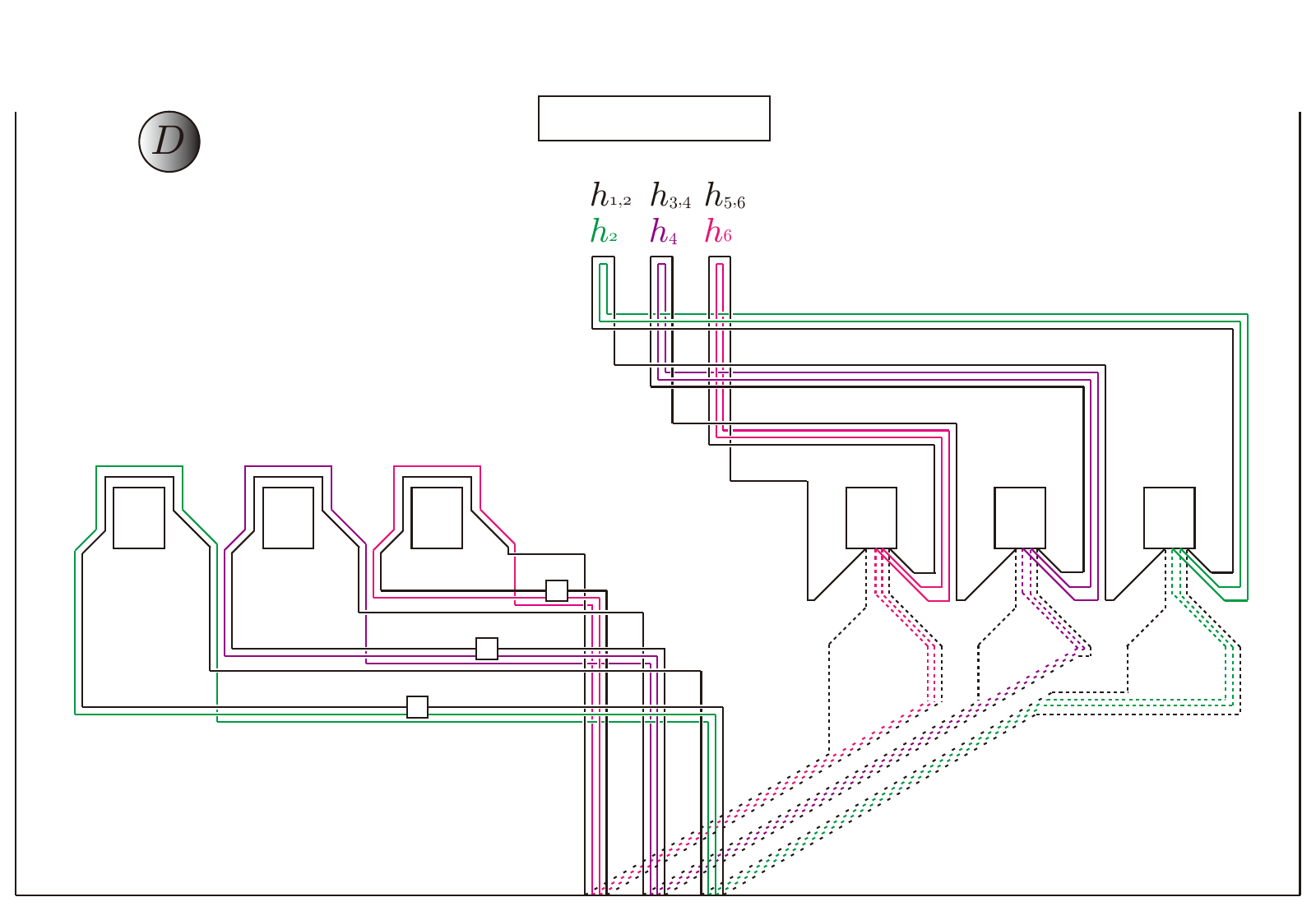}
       \caption{}
       \label{2handles108}
  \end{figure}

\begin{figure}[hbt]
  \centering
       \includegraphics[scale=.70]{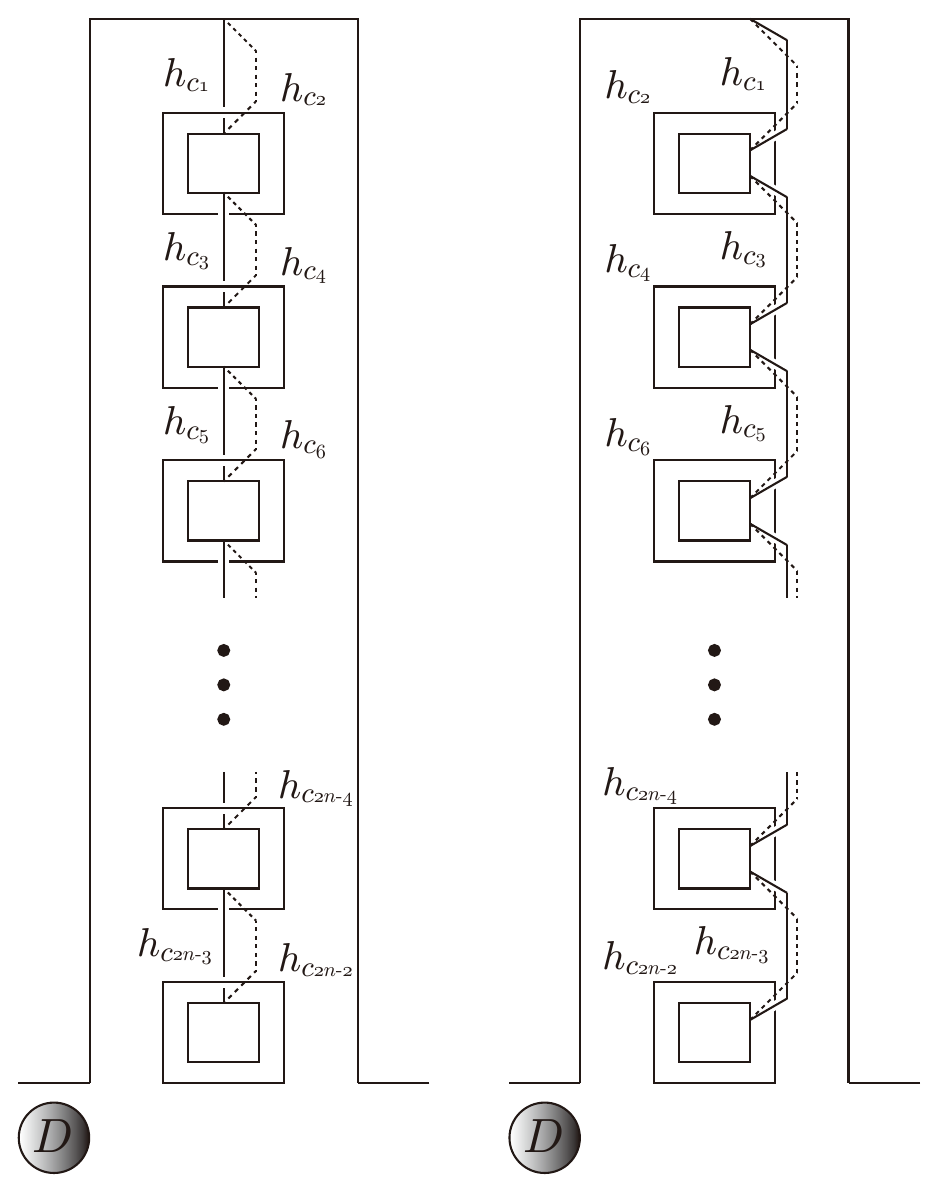}
       \caption{}
       \label{2handles1003}
  \end{figure}

\begin{figure}[hbt]
  \centering
       \includegraphics[scale=.70, angle=90]{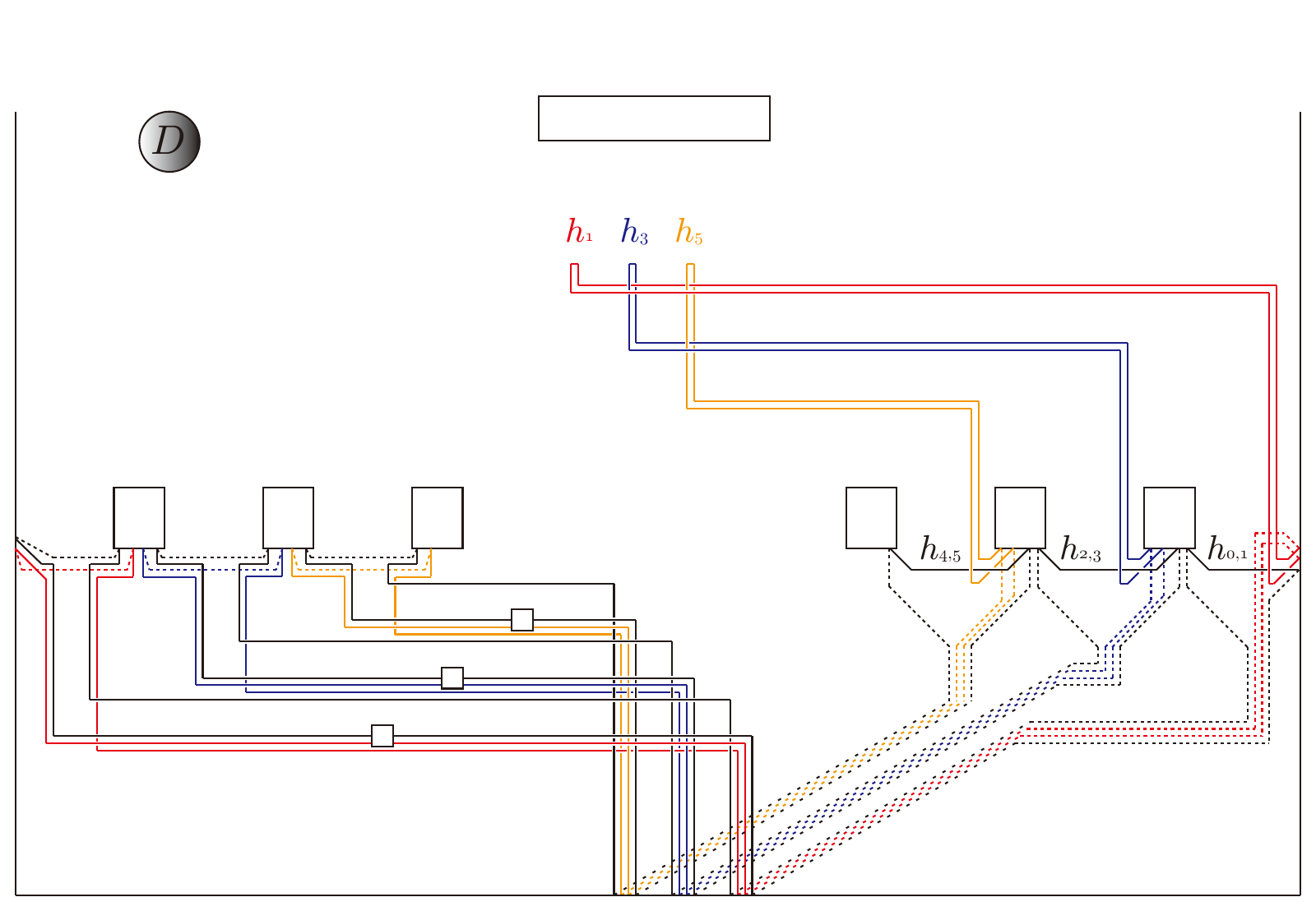}
       \caption{}
       \label{2handles91}
  \end{figure}

\begin{figure}[hbt]
  \centering
       \includegraphics[scale=.70, angle=90]{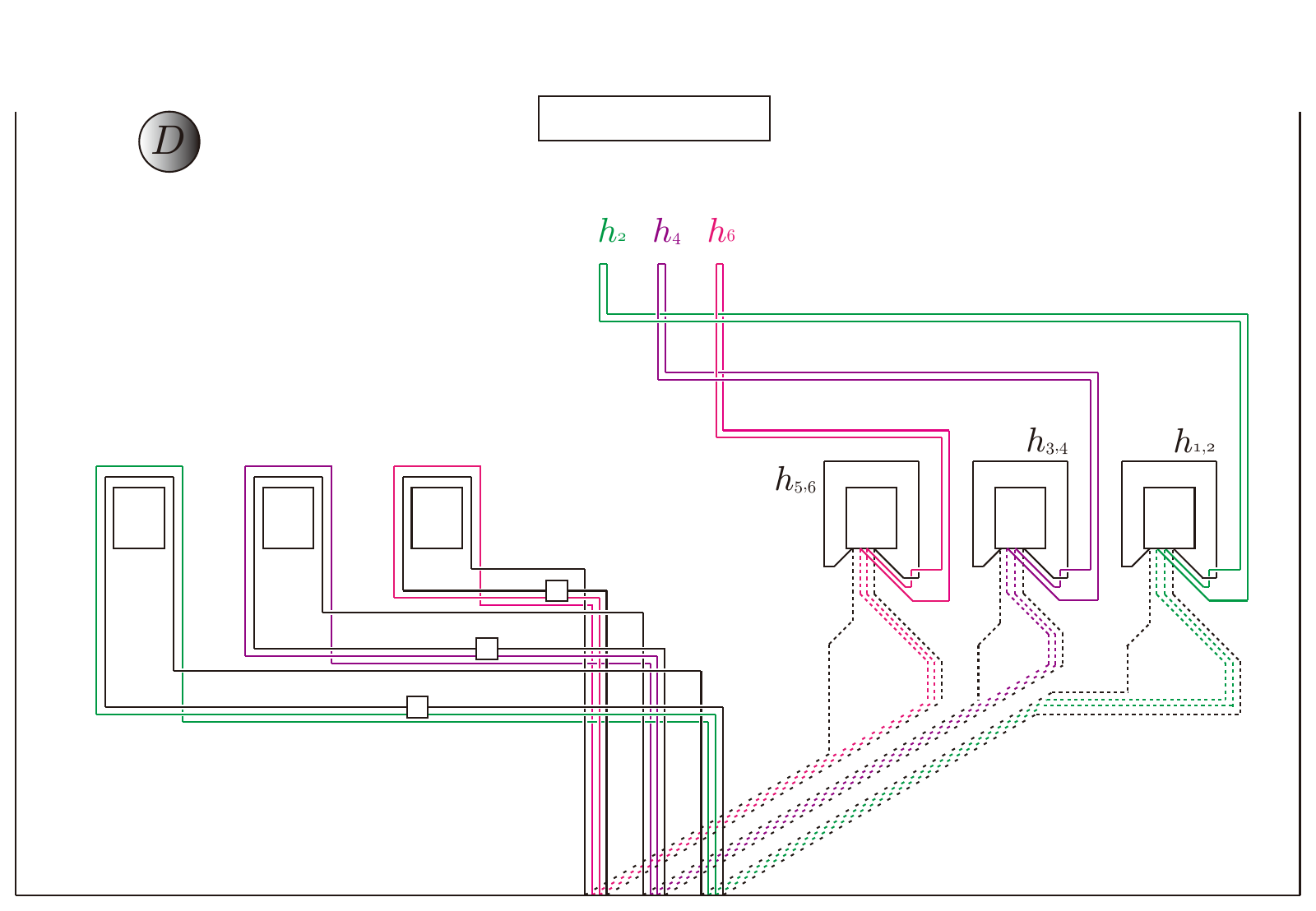}
       \caption{}
       \label{2handles109}
  \end{figure}

\begin{figure}[hbt]
  \centering
       \includegraphics[scale=.70, angle=90]{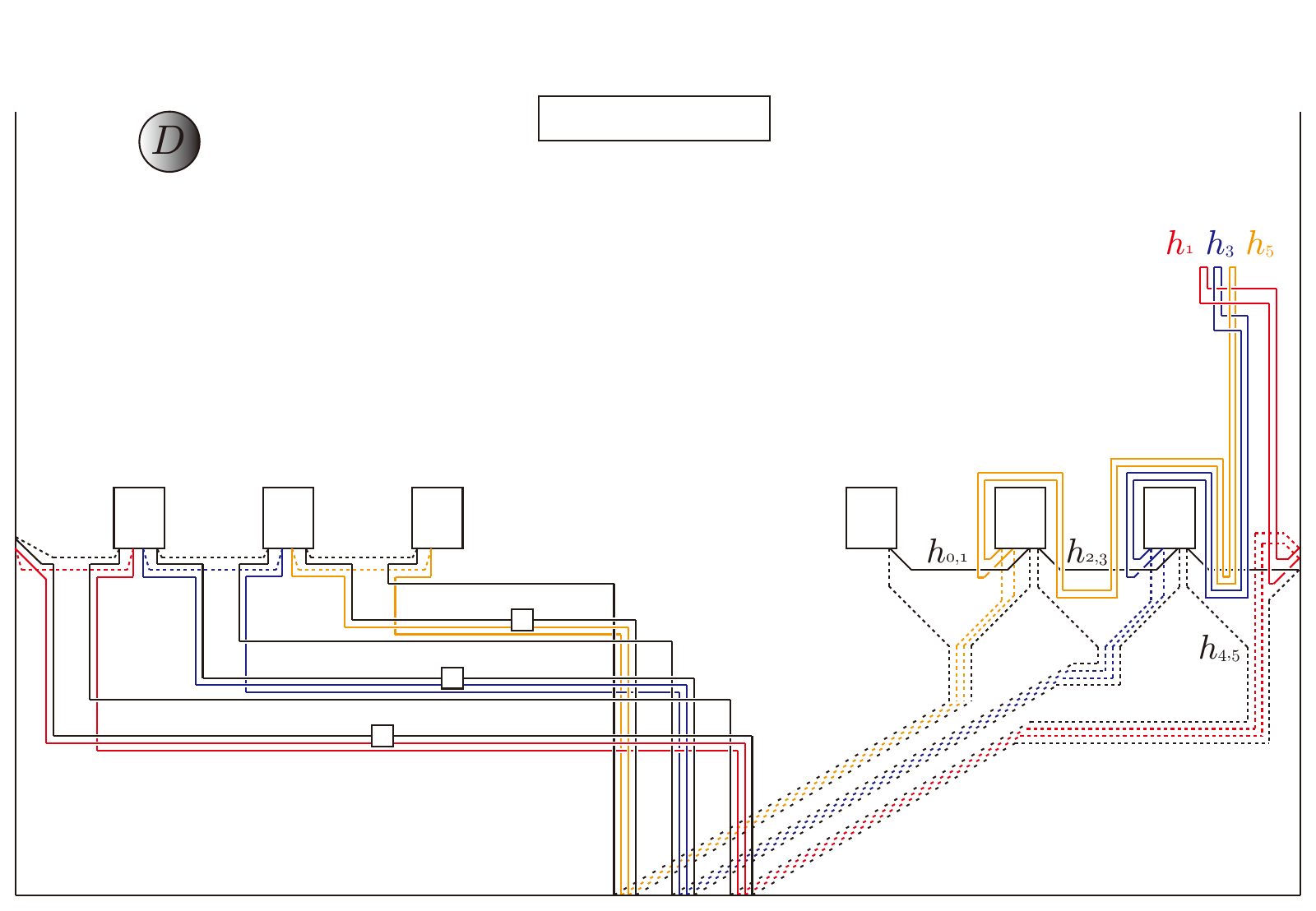}
       \caption{}
       \label{2handles92}
  \end{figure}

\end{document}